\documentclass[a4paper,12pt]{article}
\usepackage[utf8]{inputenc}
\usepackage{amsmath,amsthm,amssymb,amsfonts}
\usepackage{bbm,bm}
\usepackage{latexsym}
\usepackage{mathrsfs}
\usepackage{threeparttable}
\usepackage{tabularx}
\usepackage{booktabs}
\usepackage{graphicx,subfigure}
\usepackage{xcolor}
\usepackage{indentfirst}
\usepackage[colorlinks,
            linkcolor=blue,
            citecolor=blue
            ]{hyperref}
\usepackage{geometry}
\usepackage{caption}
\usepackage{float}
\usepackage[numbers,sort&compress]{natbib}
\usepackage{epstopdf}
\usepackage{tikz}

\numberwithin{equation}{section}
\def \[{\begin{equation}}
\def \]{\end{equation}}

\newtheorem{thm}{Theorem}[section]

\newtheorem{defi}[thm]{Definition}

\newtheorem{lem}[thm]{Lemma}
\newtheorem{cor}[thm]{Corollary}
\newtheorem{ex}[thm]{Example}

\newtheorem{rem}[thm]{Remark}

\begin{document}

\synctex=1

\setlength{\baselineskip}{20pt}
\begin{center}{\Large  The resonance graphs of nanotubes and toroidal polyhexes}
\footnote{This work is supported by the National Natural Science Foundation of China (Grant Nos. 12271229 and 12671409).}

\vspace{4mm}

{Lingmei Liang, Heping Zhang\footnote{The corresponding author.}
\renewcommand\thefootnote{}\footnote{E-mail addresses:
lianglm2023@lzu.edu.cn (L.Liang), zhanghp@lzu.edu.cn (H.Zhang).}}

\vspace{2mm}

\footnotesize{ School of Mathematics and Statistics,
Lanzhou University, Lanzhou, Gansu 730000, P. R. China}

\end{center}

\noindent {\bf Abstract}: Coronoid systems, nanotubes and toroidal polyhexes (or fullerenes) can all be regarded as carbon networks composed of carbon atoms linked in hexagonal
shapes. The resonance graphs of coronoid systems and nanotubes are not necessarily connected. For coronoid systems and  elementary nanotubes, by using flow across cuts the present authors gave criteria  for two perfect matchings  lying in the same connected component of the resonance graph (Discrete Appl. Math. 395 (2026) 443-455).
However, the sufficiency of such criterion does not hold for general nanotubes and toroidal polyhexes. 
In this paper we  strengthen this requirement to obtain valid criteria  for two perfect matchings of a nanotube (resp. toroidal polyhex) to lie in the same connected component of its resonance graph: they have the same flows across cuts along the $x$-axis (resp. longitude and latitude) and the same ladders. For toroidal polyhexes, our method uses homotopic classes of simple loops on the  torus,
and the above criterion can be simplified by using only simple flows,  for the case in which two perfect matchings have alternating hexagons.

\vspace{2mm}
\noindent{\it Keywords}: Perfect matching; Resonance graph; Nanotube; Toroidal polyhex; Fundamental group
\vspace{1mm}


{\setcounter{section}{0}
\section{Introduction}\setcounter{equation}{0}

Perfect matchings of a graph may serve some important  models of the real world, such as  Kekul\'{e} structures of polycyclic aromatic compounds and domino tilings of grids. Transforming a perfect matching of a graph on a surface into another one by a flip along an alternating face with respect to this  perfect matching. This flip can establish a graph structure on the set of all perfect matchings, called resonance graph \cite{WGrun82}, $Z$-transformation graph \cite{ZGC88} or flip graph \cite{STCR95}.

The resonance graph was independently introduced by Gr$\rm{\ddot{u}}$ndler \cite{WGrun82} and   Zhang et al. \cite{ZGC88}  for hexagonal systems, and  extended to  plane bipartite graphs \cite{ZZ2000,Four03,Che18}, plane nonbipartite graphs \cite{LWLZ26, TZra16}, graphs on surfaces \cite{TYra23}.
Resonance graphs display rich structures which are related to  distributive lattices \cite{LZhang03,TZ16}, median graphs \cite{ZLs08}, daisy cubes \cite{BCTZ25} and metallic cubes \cite{DP26}. For an early survey, see \cite{Zhang06}.

It is well-known \cite{ZGC88,T90} that the resonance graphs of hexagonal and square lattices on the plane via flips along hexagons and squares  respectively are connected.
Liu et al.  showed that the  resonance graph of a cylindrical grid $P_{2m}\square C_{2n+1}$ is connected \cite{LWLZ26}, and     the resonance graph of a toroidal grid $C_{2m}\square C_{2n+1}$  consists of two isomorphic components \cite{LZZI25}. However,  the resonance graph of a nanotube  via   hexagon flips is not necessarily connected   \cite{LZhan26}. We  can see that a toroidal polyhex (fullerene) has the disconnected resonance graph:
The three direction edges form three perfect matchings as singleton components \cite{SLZh05} since they have no alternating hexagons (For example, see Fig. \ref{Figure2-1}).
\begin{figure}
\centering
\includegraphics{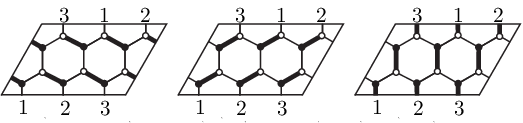}
\caption{\label{Figure2-1} Three perfect matchings of $H(3,2,1)$ have no alternating hexagons.}
\end{figure}

A natural problem is to give a criterion  when any given two perfect matchings of a graph on a surface  lie on the same connected component of its resonance graph.
Saldanha et al. \cite{STCR95} used homology and cohomology theory to obtain three criteria for  two domino tilings of a quadriculated region with holes  are in the same component of its flip graph.  The present authors \cite{LZhan26} showed that the combinatorial version in terms of flows across cuts holds for coronoids (hexagonal systems with holes) and elementary nanotubes by using purely graph-theoretical approach.
\begin{thm}[\cite{LZhan26}]{\label{EN-flow}}
Let $N$ be an elementary nanotube.
Two perfect matchings lie in the same connected component of the resonance graph of $N$  if and only if their flows across any cut segment $L$ are equal.
\end{thm}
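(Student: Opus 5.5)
Both directions rest on the same invariant: a flip preserves flow. For a cut segment $L$ (a curve crossing the tube transversally, cutting a set of edges $E(L)$), orient every edge from its white to its black end. The flow of a perfect matching $M$ across $L$ is the signed count of edges of $M$ in $E(L)$, with the sign given by the direction of crossing, minus a constant correction such as the count for a fixed reference matching or the white/black imbalance on one side. Equivalently, it is the flow of the alternating cycle system $M\oplus M_0$ across $L$.

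\textbf{Necessity.} Let $M'$ be obtained from $M$ by flipping an $M$-alternating hexagon $h$. Then $M\oplus M'=\partial h$, a contractible cycle on the cylinder bounding a face. Going around $h$, the alternating edges are oriented white-to-black and black-to-white in turn, so in the cycle orientation their contributions pair up. Since $\partial h$ bounds a disc, it crosses $L$ an even number of times with opposite orientations, so its net flow across $L$ is zero. Hence flows are invariant along every path in the resonance graph. This step is routine; I would write it by computing in the bipartite setting that a flip changes the count on either side of $L$ by the same amount.

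\textbf{Sufficiency.} Suppose $M$ and $M'$ have equal flow across every cut segment. Then $M\oplus M'$ is a disjoint union of alternating cycles $C_1,\dots,C_k$. On the cylinder each $C_i$ is either contractible or winds once around the tube. The noncontractible ones each carry flow $\pm1$ across a fixed cut, with the sign given by orientation, say $M$-edges white-to-black along the cycle. Equal flow forces the signed sum to vanish, but that alone does not cancel them individually.

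Here ellipticity (elementary, i.e.\ every edge lies in a perfect matching) is essential. I would argue by induction on $|M\oplus M'|$, or on the area enclosed, to reduce to $M\oplus M'=\emptyset$:
\begin{itemize}
\item If some $C_i$ is contractible, it bounds a disc region $R$. I would invoke the standard fact for plane elementary bipartite graphs: the subgraph inside $R$, with $M$ restricted, admits a sequence of hexagon flips inside $R$ turning $M$ on $C_i$ into $M'$ on $C_i$. This is the Zhang--Zhang connectivity/distributive-lattice result for plane elementary bipartite graphs, applied after checking that $C_i$ together with its interior is elementary, or via the existence of an $M$-resonant face inside $R$ adjacent to $C_i$. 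This reduces $|M\oplus M'|$.
\item If all $C_i$ are noncontractible, equal flow means they come in oppositely oriented consecutive pairs $C_i, C_{i+1}$ bounding an annulus $A$ on the tube. I would show that $A$ contains a hexagon alternating for $M$ or $M'$ whose flip shrinks $A$, using a "peeling" argument along the boundary of $A$ as in the coronoid case. Iterating, the annulus collapses and the two cycles merge into a contractible configuration or cancel.
\end{itemize}

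The main obstacle is this annulus step: producing, inside the region between two oppositely oriented essential alternating cycles, a resonant hexagon that strictly decreases a potential (for instance the number of hexagons in $A$). I would try first to use the height-function / lattice structure of $M$ restricted to $A$, taking a minimal element relative to the boundary, and to use elementarity to rule out forcing edges that block all flips.
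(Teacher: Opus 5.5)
Your necessity argument and your treatment of contractible cycles agree with the paper. For the contractible case no elementarity check is needed: the disc bounded by a contractible $(M,M')$-alternating cycle is a hexagonal system, whose resonance graph is connected by Lemma~\ref{hexagonal system}; this is Lemma~\ref{contractible cycle}.

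The gap is the non-contractible case. It carries the whole content of the theorem, and you leave it as an intention. Two ingredients are missing.

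First, you need the sub-nanotube $N'=N[C_i,C_j]$ bounded by a proper and an improper $M$-alternating cycle to be \emph{itself} elementary. This does not come for free from $N$: an edge of $N'$ lies in some perfect matching of $N$, but that matching need not restrict to a perfect matching of $N'$. Without it the step is false. In a non-elementary tube the pair can consist of two ladders with opposite flow contributions, and ladders are never touched by a hexagon flip (Lemma~\ref{same ladder}), so no ``peeling'' can even start. The paper handles this in the proof of Theorem~\ref{N-flow}, which contains this case, via Lemma~\ref{elementary nanotube}. If $N'$ were not elementary, forbidden edges would propagate along perpendicular bisectors of hexagons (Lemmas~\ref{forbidden edge} and~\ref{different edge-directions}). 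This is impossible unless all axis-parallel edges are forbidden and the ends of $N'$ are ladders. But in an elementary $N$ no perfect matching has a ladder. A ladder $Z$ separates the tube, and a colour count on a nonempty side of $Z$ (where $M$ restricts to a perfect matching) shows that $Z$ is alternating in every perfect matching. That makes the edges of $\partial(V(Z))$ into that side forbidden.

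Second, given elementarity, you need a flip sequence that reverses $C_i$ and $C_j$ \emph{simultaneously} and never flips the hole face of the plane drawing of $N'$. This is Lemma~\ref{path}, applied to $M$ and $M_3=M\triangle E(C_i)\triangle E(C_j)$, after which induction on the number of cycles finishes. Your height/lattice idea is the right tool, but it has to be carried out. Shrinking the annulus one hexagon at a time does not by itself guarantee a usable resonant hexagon at each stage. One way to carry it out:
\begin{itemize}
\item View $N'$ as a plane elementary bipartite graph whose interior faces are its hexagons and the hole.
\item Since the outer cycle $C_i$ and the inner cycle $C_j$ have opposite types, the height functions of $M$ and $M_3$ differ by the same unit on every hexagon and agree on the hole.
\item Hence $M$ and $M_3$ are comparable in the distributive lattice of perfect matchings of $N'$ \cite{LZhang03}.
\item Every saturated chain between them flips each hexagon exactly once and never the hole. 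This gives a path in $R_6(N')$, and hence in $R_6(N)$.
\end{itemize}

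Finally, equal flow does not make \emph{consecutive} essential cycles oppositely oriented; the signs may read $+,+,-,-$ along the tube. You only need one proper/improper pair, adjacent or not, since $M$ and $M_3$ restricted to $N[C_i,C_j]$ differ exactly on its two boundary cycles.
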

 However, this criterion does not hold for a general nanotube (see  Example 4.10 in \cite{LZhan26}).
In this article we give simple criteria  for two perfect matchings of a nanotube and a toroidal polyhex to belong to the same connected component of their  resonance graphs (Theorems \ref{N-flow} and \ref{main result}) by using flows across cuts together with  ladders. 
The remainder is organized  as follows:  in the next section,  we give some  terminology,   useful lemmas and our main theorems.  In Section 3, we provide the proof of Theorem \ref{N-flow}. In Section 4, we provide the proof of Theorem \ref{main result} using homotopic classes and the intersection numbers of loops on the torus from algebraic topology. In the last section, as some consequences, for a toroidal polyhex we obtain a stronger conclusion for two nontrivial perfect matchings (i.e. with alternating hexagons) to lie in the same connected component of its resonance graph in terms of only flows across cuts (Theorem \ref{stronger}), and we give exactly numbers of trivial  perfect matchings (Remark \ref{PMnumber}). Finally, we also point out that this stronger criterion no longer holds for nanotubes.
\section{Preliminaries}

In this section  we start some definitions and useful results;   for undefined terminology, see \cite{Lovp86}. For a graph $G$, let $V(G)$ and $E(G)$ denote its vertex set and edge set, respectively. A graph is said to be \emph{bipartite graph} if its vertices are always colored white and black such that any pair of adjacent vertices receive different colors. 
We always assume that all bipartite graphs admit such a black-white coloring.

A \emph{matching} $M$ of a graph $G$ is a set of pairwise non-adjacent edges.
 A vertex  $v$ is said to be
\emph{covered} or \emph{saturated} by $M$ if some edge of $M$ is incident with $v$. A \emph{perfect matching} $M$ of $G$ is a matching   that covers every vertex of $G$.
A graph $G$ is \emph{matchable} if it has a perfect matching. For a matchable graph $G$, an edge of $G$ is said to be \emph{forbidden} if it does not lie in any perfect matching of $G$.
 A connected bipartite graph is called \emph{elementary} if it is matchable  and has no forbidden edges.

For a perfect matching $M$ of a bipartite graph $G$, a cycle $C$ (or path $P$) is called an \emph{$M$-alternating cycle} (or path) if the edges of $C$ (or $P$) appear alternately in $M$ and off $M$. A cycle of $G$ is \emph{nice} if $G-V(C)$ has a perfect matching. Equivalently, a cycle of a bipartite graph $G$ is nice if $G$ has a perfect matching $M$ such that $C$ is $M$-alternating.
For two edge-sets $A$ and $B$, the \emph{symmetric difference} of $A$ and $B$ is $A\triangle B=(A\cup B)\setminus(A\cap B)$. For an $M$-alternating cycle $C$ of $G$, $E(C)\triangle M$ is also a perfect matching of $G$.
For distinct perfect matchings $M$ and $M'$ of $G$, $M\triangle M'$ forms disjoint $(M,M')$-alternating cycles of $G$.
An $M$-alternating cycle $C$ of a plane bipartite graph $G$ is said to be \emph{proper (improper)} if every edge of $C$ belonging to $M$ goes from the white (black) end-vertex to the black (white) end-vertex by the clockwise orientation of $C$.

A \emph{nanotube} can be obtained as follows. In the hexagonal lattice, select a lattice point as the origin $O$ and let $\vec{a}_{1}$, $\vec{a}_{2}$ be the basic lattice vectors (see Fig. \ref{Figure2-2}(left)). Let $A$ be the lattice point determined by  $\overrightarrow{OA}=a \vec{a}_{1}+b\vec{a}_{2}$, where $a,b$ are integers and $(a,b)\neq(0,0)$. Let $l_1$ and $l_2$ be the lines through $O$ and $A$, respectively, perpendicular to $\overrightarrow{OA}$. Identify each point of $l_1$ and $l_2$ such that $A$ and $O$ are superimposed to obtain a hexagonal tessellation $H$ of the cylinder.
The axis of the cylinder is parallel to $l_1$, say $x$-axis.  A nanotube is the finite subgraph of $H$ lying between two vertex-disjoint cycles $C_1$ and $C_2$ (length at least is 4) of $H$ encircling the axis. This is called  an $(a,b)$-type nanotube and  cycles $C_1$ and $C_2$ are its two open-ends. A nanotube $N$ is a planar bipartite graph \cite{SHZ96,TZra15} and it can be drawn in the plane so that one end corresponds to a hole and the other end to the exterior face and all the other faces are hexagons. We always consider such a plane graph for a nanotube.
\begin{figure}
\centering
\includegraphics{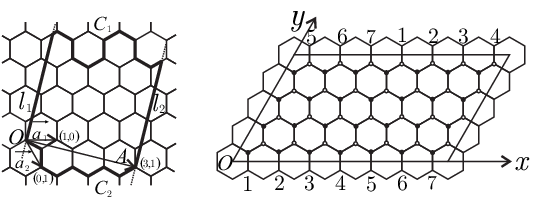}
\caption{\label{Figure2-2} A (3,1)-type nanotube and a toroidal polyhex $H(7,4,3)$.}
\end{figure}

Let $P$ be a $p \times q$-parallelogram ($p\geq2$ and $q\geq2$) section cut from the hexagonal lattice such that every corner lies on the center of a hexagon, two lateral sides pass through $q$ oblique edges, top and bottom sides pass through $p$ vertical edges (see Fig. \ref{Figure2-2}(right));
To construct  a \emph{toroidal polyhex} $H(p,q,t)$ (simply, say $H$), first identify two lateral sides of $P$ to form a cylinder, then bend the cylinder so that its top boundary is identified with the bottom boundary after a twist of $t$ hexagonal segments around the circumference. For convenience, we introduce an affine coordinate system $XOY$ for $H$ \cite{ZY08}: Take the bottom side and one lateral side of the $p \times q$-parallelogram $P$ as $x$-axis and $y$-axis such that two axes form an angle of $60^\circ$ and $P$ lies in non-negative region; The origin $O$ is the intersection of two axes. One unit of length is defined as the distance between a pair of parallel edges in a hexagon. Toroidal polyhex $H(p,q,t)$ is a bipartite graph \cite{SLZh05}.

Next, we provide the definition for the resonance graph via flips along hexagons. 

\begin{defi}
{\rm Let $G$ be a hexagonal system, coronoid system, nanotube, or  toroidal polyhex. The {\sl resonance graph}  $R_6(G)$ is a simple graph in which the vertices are the perfect matchings of $G$ and two perfect matchings $M_1$ and $M_2$ are joined by an edge if their symmetric difference consists of exactly six edges of a hexagon $s$ of $G$ (say $M_1$ and $M_2$ are transformed into each other by a flip along the hexagon $s$).}
\end{defi}

The resonance graph $R(G)$ of a connected plane bipartite graph $G$ is a graph whose vertices are the perfect matchings and two perfect matchings are adjacent  provided their symmetric difference forms a cycle that is  the boundary of an {\sl interior} face of $G$ (flip an interior face). We  present some known results for hexagonal systems and (plane)  bipartite graphs as follows.
\begin{lem}[\cite{ZGC88}]\label{hexagonal system}
The resonance graph of a matchable hexagonal system is a
connected bipartite graph.
\end{lem}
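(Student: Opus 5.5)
The lemma has two parts: $R_6(H)$ is bipartite, and it is connected. The plan is to prove bipartiteness by a parity argument and connectivity by induction via a ``lowest alternating cycle'' reduction.

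\textbf{Bipartiteness.} A hexagonal system $H$ is a plane bipartite graph with a fixed black--white colouring. Give every edge a clockwise-based orientation relative to a fixed reference. For a perfect matching $M$ and a hexagon $s$ that is $M$-alternating, the flip along $s$ switches $s$ between proper and improper. I would define a potential $\phi(M)$ with values in $\mathbb{Z}$: fix a base perfect matching $M_0$, and decompose $M\triangle M_0$ into disjoint alternating cycles $C$. For each cycle, count the hexagons in its interior, with sign $+1$ if $C$ is proper with respect to $M$ and $-1$ otherwise, and sum these signed counts. The claim is that a single hexagon flip changes $\phi$ by exactly $\pm1$. This reduces to the standard fact that $M\triangle M'$ determines a height function on faces, which changes by one across each flipped face. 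Hence the parity of $\phi$ gives a proper $2$-colouring of $R_6(H)$.

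The cleanest way to realize this is the height function: for each interior face $f$, define $h_M(f)$ as the signed number of edges of $M\triangle M_0$ crossed by a path from the outer face to $f$. Bipartiteness of $H$ makes this well defined, since every alternating cycle is crossed consistently. A flip at $s$ changes only $h(s)$, and changes it by exactly $\pm1$. Then $\sum_f h_M(f) \bmod 2$ is the required $2$-colouring.

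\textbf{Connectivity.} Given distinct perfect matchings $M$ and $M'$, I induct on $|M\triangle M'|$. Take a cycle $C$ of $M\triangle M'$. The subgraph $H[C]$ consisting of $C$ together with its interior is again a hexagonal system. Since $C$ is $M$-alternating, $M$ restricted to $H[C]$ is a perfect matching of $H[C]$.

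The key step is to show that $H[C]$ has an $M$-alternating hexagon $s$ whose flip reduces the interior region bounded by the symmetric difference; equivalently, one that decreases $\sum_f |h_M(f)-h_{M'}(f)|$. I would argue as follows. Consider the faces inside $C$ where $h_M-h_{M'}$ attains its maximum (say this maximum is positive). Take the union of these faces. Its boundary consists of edges of $M\triangle M'$, oriented consistently, and this forces the alternation pattern. The main obstacle is to show that some hexagon among these faces has all six of its edges alternating. For this I would use the classical Zhang--Guo--Chen argument: choose a hexagon in this region that lies on the boundary of the region in an extremal position, for instance the one with the highest top vertex. A local case analysis of the hexagonal lattice geometry, using that the matching must cover the top vertex and its neighbours, then shows that this hexagon is $M$-alternating. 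Flipping it lowers the maximum of the height difference on that face, and hence reduces the potential. Iterating this until the potential is zero gives $M=M'$.
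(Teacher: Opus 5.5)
The paper does not prove this statement; it is quoted from \cite{ZGC88}, so your argument has to stand on its own.

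The bipartiteness half does stand. Orient edges from white to black; then the difference of two perfect matchings is a circulation and the dual height $h_M$ is well defined. A flip at $s$ changes $h_M$ by $\pm1$ on $s$ only, so the parity of $\sum_f h_M(f)$ properly $2$-colours $R_6(H)$.

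The set-up for connectivity is also sound. Let $U$ be the set of hexagons where $d=h_M-h_{M'}$ attains its maximum $D>0$. The boundary cycles of $U$ are cycles of $M\triangle M'$, and both $M$ and $M'$ restrict to perfect matchings of every component of $U$. Then $\sum_f|d(f)|$ drops when you flip a hexagon of $U$ that is $M$-alternating \emph{with the same orientation as the outer boundary of $U$}. A hexagon of the opposite orientation would raise $d$ to $D+1$, so ``$M$-alternating'' is not by itself what you need.

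The gap is the existence of such a hexagon, which is the whole content of the connectivity theorem, and the extremal argument you sketch for it is false. Take pyrene: hexagons $s_1,s_2$ in the upper row and $w_1,w_2$ in the lower row, with $w_1$ below the common edge of $s_1$ and $s_2$.
\begin{itemize}
\item Its two interior vertices are adjacent, so both perfect matchings for which the $14$-cycle perimeter is alternating contain that interior edge.
\item Their symmetric difference is the perimeter, so $d$ is constant and $U$ is all of pyrene.
\item A direct check shows that for one of them the only alternating hexagon is $w_1$, in the lower row, and for the other it is $s_2$.
\item In particular $s_1$ is alternating for neither, because its lower-right vertex is matched to the other interior vertex rather than inside $s_1$. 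This is a fact about the interior, not about the top of $U$.
\end{itemize}
Reflecting pyrene in a vertical line exchanges proper and improper. So whatever sign convention you fix, there is an instance in which no hexagon with the highest top vertex can be flipped, under any tie-break. Moreover, $U$ need not be simply connected: two nested cycles of $M\triangle M'$ with opposite orientations produce an annulus. So an argument about a hexagonal system with alternating perimeter would not apply directly anyway.

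What is needed is a global argument, and one route works.
\begin{itemize}
\item Each component $U_0$ of $U$ is elementary. A forbidden edge of a plane polyhex propagates along a straight line of forbidden edges, as in the proof of Lemma~\ref{different edge-directions}. In the plane this line must reach the boundary of $U_0$. But no boundary edge of $U_0$ is forbidden in $U_0$, since each lies in $M$ or in $M'$.
\item In a plane elementary bipartite graph, an $M$-alternating cycle encloses an $M$-alternating face of the same orientation; alternatively, use Lemma~\ref{path}.
\item Apply this to $U_0$ with its holes regarded as faces. The holes have the opposite orientation, so the face obtained is a hexagon.
\end{itemize}
Elementarity cannot be skipped. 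Consider the plane bipartite graph formed by two concentric $2n$-cycles joined by $n$ spokes, so that all faces between them are hexagons. The spokes are forbidden, the two cycles can be given opposite orientations, and then the outer cycle encloses no alternating face of its own orientation.
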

\begin{lem}[\cite{Lovp86}]\label{unique}
Let $G$ be a bipartite graph with a unique perfect matching. Then $G$ must contain at least one vertex of degree 1 in each color class.
\end{lem}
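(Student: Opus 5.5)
We argue by contradiction via a longest-path construction in the spirit of the classical proof of this fact. Let $M$ be the unique perfect matching of the bipartite graph $G$. Since $M$ is unique, $G$ contains no $M$-alternating cycle: otherwise, for such a cycle $C$, the set $E(C)\triangle M$ would be a second perfect matching. So the plan is to show that if some colour class, say the white one, has no vertex of degree $1$, then we can build an $M$-alternating cycle. The argument for the black class is symmetric.

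\textbf{Step 1 (the degenerate case).} If $G$ has no edges then, being matchable, it has no vertices, and there is nothing to prove. So we may take $V(G)\neq\emptyset$; then $M\neq\emptyset$ and both colour classes are nonempty.

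\textbf{Step 2 (a maximal alternating path).} Assume every white vertex has degree at least $2$. Among all $M$-alternating paths $P=v_0v_1\cdots v_{2k+1}$ with $k\ge 0$, where $v_0$ is black, $v_{2i}v_{2i+1}\in M$, and hence $v_1$ is white, choose one of maximum length. Such paths exist, since any edge of $M$ written from its black end gives one with $k=0$. The last vertex $v_{2k+1}$ is white and has degree at least $2$. So besides its matched neighbour $v_{2k}$ it has another neighbour $u$, which is black, and $v_{2k+1}u\notin M$.

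\textbf{Step 3 (extend or close a cycle).} The vertex $u$ is $M$-matched to some white vertex $w$, with $uw\in M$. There are two cases.
\begin{itemize}
\item If $u\notin V(P)$, then $w\notin V(P)$ as well, because every vertex of $P$ is matched within $P$. Then $Pu w$ is a longer alternating path of the same form, contradicting maximality.
\item If $u\in V(P)$, then $u=v_{2j}$ for some $j<k$, since black vertices of $P$ occur at even positions and $u\neq v_{2k}$. In this case $v_{2j}v_{2j+1}\cdots v_{2k+1}v_{2j}$ is a cycle whose edges alternate between $M$ and non-$M$: it starts with the $M$-edge $v_{2j}v_{2j+1}$ and ends with the non-$M$ edge $v_{2k+1}v_{2j}$. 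This is an $M$-alternating cycle, a contradiction.
\end{itemize}

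Hence some white vertex has degree at most $1$, and by Step 1 its degree is exactly $1$. Applying the same argument with the colours swapped gives a black vertex of degree $1$.

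The only delicate point is the parity bookkeeping in Step 3: we must check that $u$ can coincide only with an even-indexed vertex, and that the closed walk really alternates. Fixing the orientation convention for $P$ (starting black, matched edges in the even-to-odd slots) makes both checks immediate.
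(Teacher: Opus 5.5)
Your proof is correct. The paper does not prove this lemma itself (it only cites Lov\'asz--Plummer), and your maximal $M$-alternating path argument is the standard proof of this fact: bipartiteness forces the extra neighbour $u$ onto an even position, so it either extends the path or closes an $M$-alternating cycle. One small quibble in Step 1: for the empty graph the conclusion is false rather than vacuous, so the lemma tacitly assumes $V(G)\neq\emptyset$; this does not affect your argument.
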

\begin{lem}[\cite{ZZ2000}]\label{forbidden edge}
Let $G$ be a matchable plane bipartite graph. Assume that a cycle $C = v_1v_2 \cdots v_{2n}v_1$ of $G$ lies in the boundary of a face of $G$. If an edge $v_{2i-1}v_{2i}$ ($1\leq i \leq n$) is a forbidden edge of $G$, then there exists $j$ such that $v_{2j}v_{2j+1}$ ($1\leq j \leq n$, where $v_{2n+1}:=v_{1}$) is also a forbidden edge of $G$.
\end{lem}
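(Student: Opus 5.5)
The plan is to argue by contradiction and to use planarity together with Lemma \ref{unique}. I would work in the subgraph obtained by deleting all forbidden edges. Suppose $e_1=v_{2i-1}v_{2i}$ is forbidden, while every edge $v_{2j}v_{2j+1}$ of $C$ (the edges of the other parity class on $C$) is allowed.

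First I would record the standard structure of forbidden edges in a matchable bipartite graph. Let $G'$ be $G$ with all forbidden edges removed. Then $G'$ has the same perfect matchings as $G$, and its components $G_1,\dots,G_k$ are elementary. Every forbidden edge of $G$ joins two distinct such components. This follows from the Dulmage--Mendelsohn decomposition: an edge inside an elementary component lies in an alternating cycle, so it is allowed.

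Direct the edges between components as follows: an edge $uv$ with $u$ white in $G_a$ and $v$ black in $G_b$ gives an arc $G_a\to G_b$. By the usual Kőnig--Hall argument, the resulting digraph on the components is acyclic. Indeed, a directed cycle of components would combine with alternating paths inside the elementary components to give an $M$-alternating cycle through a forbidden edge, contradicting forbiddenness.

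Now walk around $C$. The edges $v_{2j}v_{2j+1}$ are allowed, so each lies inside a single component, and consecutive vertices $v_{2j},v_{2j+1}$ share a component. The edge $e_1$ goes between different components. Colour so that the odd-indexed vertices $v_{2j-1}$ are, say, white and the even-indexed ones black. Then every edge $v_{2j-1}v_{2j}$ that changes component is an arc oriented white-to-black. Traversing $C$ cyclically in the order $v_1v_2\cdots v_{2n}v_1$, every change of component therefore happens along an arc of the digraph traversed in its forward direction. The sequence of components visited forms a closed directed walk containing at least one arc, namely the one from $e_1$. This is a directed cycle in the acyclic digraph, which is a contradiction.

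If the acyclicity step is the main obstacle, I would try two things. One is to prove it directly: take a perfect matching $M$ and use the fact that each $G_a$ has perfect matching $M\cap E(G_a)$ and is elementary, so any black vertex and white vertex in it are joined by an $M$-alternating path starting and ending with $M$-edges. Concatenating these paths along a directed cycle of components yields an $M$-alternating cycle containing a forbidden edge. The other fallback is to use planarity together with Lemma \ref{unique} via the face containing $C$. Note, though, that the argument above seems not to need the face hypothesis at all, only that $C$ is a cycle. I would double-check the orientation bookkeeping here, since this is where the parity of the indices, $v_{2i-1}v_{2i}$ versus $v_{2j}v_{2j+1}$, must line up with the colour classes.
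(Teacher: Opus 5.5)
Your proof is correct, and it takes a different route from the one the paper relies on: the paper gives no proof of this lemma. It quotes the lemma from \cite{ZZ2000}, where it is stated for a cycle in a face boundary of a plane bipartite graph, and applies it only to hexagons in the proof of Lemma \ref{different edge-directions}.

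Your route works through the elementary components of $G$ minus its forbidden edges, orienting each forbidden edge from the component of its white end to that of its black end. This proves more than the cited statement: the conclusion holds for every cycle of every matchable bipartite graph. So planarity, the face hypothesis and Lemma \ref{unique} are not needed, and your fallback is unnecessary.

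The orientation bookkeeping you flagged is fine. The statement does not mention colours, so if the odd-indexed vertices are black you simply traverse $C$ in the reverse order; equivalently, swapping colours reverses all arcs and preserves acyclicity. Then every change of component along $C$ happens on some forbidden $v_{2j-1}v_{2j}$ traversed forward. A closed directed walk of positive length in a loopless digraph contains a directed cycle, which gives the contradiction.

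What you should make explicit is the one standard fact everything rests on: a connected bipartite graph $K$ is elementary iff $K-u-w$ has a perfect matching for every white $u$ and black $w$ of $K$ (Lov\'asz--Plummer).
\begin{itemize}
\item It shows directly that a forbidden edge $uw$ cannot have both ends in one component $K$. Note that $uw\notin E(K)$, so your phrase ``an edge inside an elementary component'' needs exactly this fact. A perfect matching of $K-u-w$, plus $uw$, plus $M$ outside $K$, would be a perfect matching of $G$ through $uw$.
\item It gives acyclicity in the same way. Take a directed cycle $K_1\to\cdots\to K_r\to K_1$ realized by forbidden edges $u_lw_{l+1}$, with $u_l$ white in $K_l$ and $w_{l+1}$ black in $K_{l+1}$ (indices mod $r$). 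Perfect matchings of the graphs $K_l-u_l-w_l$, together with these edges and $M$ elsewhere, form a perfect matching of $G$ containing forbidden edges.
\end{itemize}
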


\begin{lem}[\cite{LZhan26}]\label{path}
Let $G$ be a plane elementary bipartite graph,  $C_0$ be the boundary of the exterior face of $G$ and $C_1,\ldots,C_n$  be the boundaries of interior faces of $G$.  Let $M_{1}$ be a  perfect matching of $G$ such that $C_0$ is a proper $M_{1}$-alternating cycle and $C_{1},\ldots,C_{n}$ are improper $M_{1}$-alternating cycles. Then $R(G)$ has a path between $M_{1}$ and $M_{2}:=M_1\triangle (\cup_{i=0}^n E(C_{i}))$ by flipping only those faces of $G$ in the region  obtained from the interior of  $C_{0}$ minus  $C_{i}$ ($1\leq i\leq n$) with their interiors.
\end{lem}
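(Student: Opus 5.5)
Since the statement is cited from \cite{LZhan26}, I will reprove it by induction on the number $n$ of interior faces of $G$ bounded by $C_1,\ldots,C_n$, together with the faces lying in the region $D$. Here $D$ denotes the interior of $C_0$ minus the closed interiors of $C_1,\ldots,C_n$. The strategy is to find a single face $f$ in $D$ that is $M_1$-alternating and whose flip leads to a smaller instance of the same configuration.

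The natural candidate comes from the orientation hypotheses. $C_0$ is proper while each hole boundary $C_i$ is improper. Seen from the region $D$, all boundary cycles therefore have the same local orientation type relative to $D$: a hole boundary traversed clockwise bounds $D$ on its outside.

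First I will show that some interior face $f$ of $G$ in $D$ is a proper $M_1$-alternating cycle. To do this, consider the subgraph $G'$ of $G$ consisting of $C_0$, the $C_i$, and everything in $D$. By the standard Zhang--Zhang argument, a matchable elementary plane bipartite graph whose outer boundary is proper $M$-alternating contains a proper $M$-alternating interior face. I will adapt this. Look at the $M_1$-edges inside $D$ and choose a minimal region bounded by an $M_1$-alternating cycle that is "proper with respect to $D$". Using Lemma \ref{forbidden edge} and elementarity, such a cycle that is not a face can be shrunk: a vertex on it has an interior $M_1$-edge, and following the alternating paths inside gives a smaller proper alternating cycle. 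This terminates at a face $f\neq C_i$, since the $C_i$ are improper.

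Next, flip $f$: set $M_1'=M_1\triangle E(f)$. Then $f$ becomes improper. I will treat $f$ as a new "hole" merged with the adjacent boundary cycles. More precisely, take the symmetric difference of $f$ with the boundary cycles it shares edges with. If $f$ shares edges with $C_0$, the new outer boundary $C_0\triangle f$ stays proper $M_1'$-alternating, because boundary edges get complemented consistently. If $f$ is disjoint from the boundaries, $f$ becomes a new improper hole and $D$ shrinks. In every case the region $D'=D\setminus f$ has fewer faces. The target also agrees: $M_1'\triangle(\text{new boundaries})=M_2$, because symmetric differences telescope. The base case is $D$ empty: then the boundaries together bound only the holes, so $C_0\triangle\bigcup C_i=\emptyset$ on edges, and $M_1=M_2$.

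I expect the main obstacle to be the merging step. When $f$ touches several boundary cycles, or touches one of them in a non-path (pinched) way, the union $f\cup C_i$ may fail to be a disjoint family of cycles bounding faces of an elementary graph. The induction hypothesis then does not apply literally. My first attempt will be to choose $f$ adjacent to $C_0$ along a single path, using an extremal choice such as a face that contains an $M_1$-edge of $C_0$. If that fails, I will instead strengthen the induction statement to arbitrary regions bounded by disjoint cycles, without requiring elementarity of the subgraph, only that the boundary cycles alternate with the stated orientations.
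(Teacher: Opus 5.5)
The paper only quotes this lemma from \cite{LZhan26}, so I am judging your argument on its own. It has a genuine gap at the induction step.

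\textbf{What works.} Your first step is fine for the original $G$ if you simply cite \cite{ZZ2000}. In a plane elementary bipartite graph, the perfect matching with no proper alternating face is the unique sink of the resonance digraph and has no proper alternating cycle. So $C_0$ being proper forces a proper $M_1$-alternating face, and that face cannot be one of the improper $C_i$. Your own ``shrinking'' sketch does not work as written: every vertex of an $M_1$-alternating cycle is matched along that cycle, so it never has an interior $M_1$-edge. The case where $f$ is vertex-disjoint from $C_0,\ldots,C_n$ is also fine. Then $G$ is unchanged, $f$ becomes one more improper hole, and the target telescopes.

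\textbf{The gap.} The merging case is where the argument breaks. There you replace $G$ by the subgraph $G'$ bounded by $C_0\triangle E(f)$ (or $C_i\triangle E(f)$). To run the first step again you need $G'$ to be elementary, and nothing in your argument supplies this. It is not a formality: it is essentially a consequence of the statement you are proving.

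Your fallback is false. Dropping elementarity and keeping only the orientation conditions on disjoint boundary cycles does not give a valid statement. Take a ring of $n\ge 3$ hexagons, consecutive ones sharing an edge parallel to the axis, so that both open-ends are zigzag cycles, and draw it as an annulus.
\begin{itemize}
\item Half the vertices of each end have degree $2$, so each end is matched internally in every perfect matching.
\item Hence all axis-parallel edges are forbidden.
\item Every hexagon contains two such edges in opposite positions, so no hexagon is ever alternating.
\item Choose on the outer end the matching that makes it proper, and on the inner end the one that makes it improper.
\end{itemize}
This satisfies all your strengthened hypotheses, yet $M_2\neq M_1$ and no flip is available. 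This is exactly the non-elementary situation behind Lemma \ref{elementary nanotube}. So the induction cannot be closed this way.

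\textbf{A possible repair.} Never change the graph. Keep $G$, and hence elementarity, fixed, and induct on a set $S$ of faces still to be flipped. The invariant is that the edges lying on exactly one face of $S$ form disjoint $M$-alternating cycles, each with $S$ on its proper side. Initially $S$ is the set of faces of $D$ and these cycles are $C_0,\ldots,C_n$.
\begin{itemize}
\item Flipping a proper face $f\in S$ preserves the invariant automatically, however $f$ meets the boundary. A vertex on both $f$ and the boundary carries the same $M$-edge in both, and that edge cancels. This also disposes of your pinching and pocket worries.
\item The target $M\triangle(\text{boundary of }S)$ does not change.
\end{itemize}

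\textbf{What remains.} You must show that every nonempty such $S$ contains a proper $M$-alternating face. This does not follow from your first step applied to the subgraph spanned by $S$, which may be non-elementary as the example shows. The natural tool is the distributive lattice of \cite{LZhang03}, ordered by face heights. Let $h_M(f)$ be the winding number around $f$ of $M-M_0$, where $M$-edges are oriented white to black and the edges of a fixed $M_0$ black to white. Then:
\begin{itemize}
\item a flip changes the height of the flipped face alone, by one;
\item $h_{M_1}-h_{M_2}$ equals $1$ on the faces of $D$ and $0$ on $C_1,\ldots,C_n$;
\item the resonance digraph of an elementary plane bipartite graph is the Hasse diagram of this lattice.
\end{itemize}
So a saturated chain from $M_1$ down to $M_2$ exists. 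Heights decrease monotonically along it, so it flips every face of $D$ exactly once and never a $C_i$. That proves the lemma directly, without any induction.
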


A cycle  of a graph embedded in surface (possibly with boundary) is called a \emph{contractible} if it can be contracted to a point; Otherwise, a   \emph{non-contractible cycle}. For a contractible cycle $C$ of a nanotube or a toroidal polyhex $H$, it bounds a hexagonal system, denoted by $H[C]$.
The following lemma is a simple but useful result.

\begin{lem}\label{contractible cycle}
Let $G$ be a nanotube or a toroidal polyhex and $M_{1}$, $M_{2}$ be two perfect matchings of $G$. If $M_{1} \triangle M_{2}$ forms a contractible cycle $C$, then $R_6(G)$ has a path between $M_{1}$ and $M_{2}$  by flipping only hexagons in $H[C]$.
\end{lem}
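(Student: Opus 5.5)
The plan is to reduce the statement to Lemma \ref{hexagonal system}, applied to the hexagonal system $H[C]$ bounded by $C$. Since $C$ is contractible, $H[C]$ is a finite hexagonal system (a disk region tiled by hexagons) embedded in $G$, and its boundary is exactly $C$. By definition of the resonance graph, it suffices to produce a sequence of hexagon flips inside $H[C]$ transforming $M_1$ into $M_2$, keeping all edges outside $H[C]$ fixed.

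Put $M_0:=M_1\setminus E(H[C])$. Since $M_1\triangle M_2=E(C)$, the edges of $M_1$ and $M_2$ off $C$ coincide. A vertex of $C$ is covered by an edge of $C$ in both $M_1$ and $M_2$, because $C$ is $M_1$- and $M_2$-alternating. Hence every vertex of $H[C]$ is matched, under $M_1$ and under $M_2$, inside $H[C]$: boundary vertices are matched by edges of $C$, and interior vertices have all their neighbours in $H[C]$. It follows that $M_1':=M_1\cap E(H[C])$ and $M_2':=M_2\cap E(H[C])$ are perfect matchings of $H[C]$, and $M_i=M_0\cup M_i'$ for $i=1,2$. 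In particular $H[C]$ is matchable.

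By Lemma \ref{hexagonal system}, $R_6(H[C])=R(H[C])$ is connected. So there is a sequence $M_1'=N_0,N_1,\ldots,N_k=M_2'$ of perfect matchings of $H[C]$ in which consecutive terms differ by a flip of a hexagon of $H[C]$. Each $M_0\cup N_j$ is a perfect matching of $G$, since $M_0$ covers exactly the vertices outside $H[C]$. Consecutive matchings $M_0\cup N_j$ differ by the same hexagon, and every hexagon of $H[C]$ is a hexagon of $G$. This gives the required path in $R_6(G)$ using only hexagons in $H[C]$.

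The main point to check carefully is that $H[C]$ is genuinely a hexagonal system with outer boundary $C$. In particular, for a nanotube, a contractible cycle cannot enclose the hole (an open end), because a cycle around an open end encircles the axis and is non-contractible. On the torus, a contractible simple closed curve bounds a disk, and the faces of $G$ inside that disk are hexagons, so $H[C]$ is a finite subgraph of the hexagonal lattice bounded by a simple cycle. The paper already asserts that $H[C]$ is a hexagonal system, so this can be cited. Alternatively, Lemma \ref{path} with $n=0$ applies, since $C$ is a proper or improper alternating cycle; if $C$ is improper, swap the roles of $M_1$ and $M_2$. For this route one still needs $H[C]$ to be elementary, which follows from the connectivity argument above anyway.
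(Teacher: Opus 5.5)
Your proof is correct and follows the paper's own argument: restrict $M_1,M_2$ to the hexagonal system $H[C]$, apply the connectivity of its resonance graph (Lemma \ref{hexagonal system}), and lift the resulting path by adding back the common part $M_0$ outside $H[C]$. You also justify why the restrictions are perfect matchings of $H[C]$ and why $H[C]$ is a hexagonal system, which the paper leaves implicit. Your closing aside via Lemma \ref{path} is not needed, and its claim that elementarity of $H[C]$ ``follows from the connectivity argument'' is not justified as stated (it would need the standard fact that a hexagonal system with a nice perimeter is normal), but your main proof does not rely on it.
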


\begin{proof}
Since $C$ is a contractible cycle, $H[C]$ is a hexagonal system. Let $M_{1}^{*}:= M_1|_{H[C]}$ and $M_{2}^{*}:= M_2|_{H[C]}$.
Then $M_{1}^{*}$ and $M_{2}^{*}$ are two perfect matchings of $H[C]$ and $M_{1}^{*}\triangle  M_{2}^{*}=E(C)$.
By Theorem \ref{hexagonal system}, $R_{6}(H[C])$ has a path $M_{1}^{*}(=M_{1}')M_{2}'\cdots M_{t-1}'(M_{t}'=)M_{2}^{*}$. 
Let  $M_0:=M_{1}\setminus M_{1}^{*}=M_{2}\setminus M_{2}^{*}$. Then $R_6(G)$ has a path $M_{1}(M_{2}'\cup M_0)\cdots (M_{t-1}'\cup M_0)M_2$. That is, $R_{6}(G)$ has a path between $M_{1}$ and $M_{2}$  by flipping only hexagons  in $H[C]$.
\end{proof}

\subsection{Flow across a cut}
The \emph{dual graph} $G^{*}$ of a toroidal polyhex (or the plane drawing of a nanotube) $G$ is a graph that has one vertex at the center of each hexagon (or every face) of $G$, two vertices $f_{1}^{*}$ and $f_{2}^{*}$ are joined by an edge $e^{*}$ in $G^{*}$ if their corresponding faces $f_{1}$ and $f_{2}$ of $G$ have an edge $e$ in common ($e^{*}$ only crosses edge $e$ at the center and $e^*$ is straight in the interiors of hexagons).

\begin{defi}[\cite{LZhan26}]
{\rm A \emph{cut segment} $L$ is a path in the dual graph of a nanotube $N$ with a direction, and its end vertices correspond to two open-ends of $N$.
The set of edges of $N$ crossed by $L$ is called a \emph{cut}, denoted by $L^{*}$ (see Fig. \ref{Figure2-3}(a)). }
\end{defi}

\begin{figure}
\centering
\includegraphics[scale=0.8]{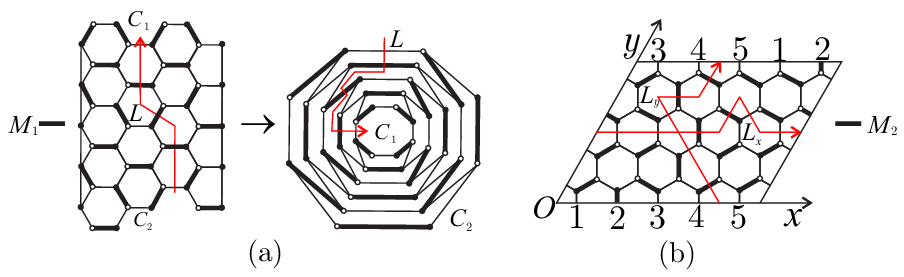}
\caption{\label{Figure2-3} (a) A $(4,2)$-type nanotube and its  planar drawing. (b) Toroidal polyhex $H(5,4,3)$.}
\end{figure}

Similarly, we define two cuts in a toroidal polyhex.
\begin{defi}
{\rm    An  oriented cycle $L_{x}$ (resp. $L_{y}$) of the dual graph of a toroidal polyhex $H$ is called a \emph{cut cycle} if it winds once around the $x$-axis (resp. $y$-axis) direction in the $p \times q$-parallelogram of $H(p,q,t)$. The set of edges of $H(p,q,t)$ crossed by $L_{x}$ (resp. $L_{y}$) is called a \emph{cut}, denoted by $L_{x}^*$ (resp. $L_{y}^{*}$) (see Fig. \ref{Figure2-3}(b)).  }
\end{defi}

Let $G$ be a nanotube or a toroidal polyhex, and $L$ be a cut segment or a cut cycle.

\begin{defi}[\cite{LZhan26}]
{\rm    The \emph{flow} of a given perfect matching $M$ of $G$ across $L$, denoted by ${\rm flow}_{L}(G, M)$, is the number of edges in $M$ crossed by $L$, where each such edge in $M$ is counted positively (resp. negatively) if its white end-vertex is to the left side (resp. right side) of $L$.}
\end{defi}

For convenience,
let $(L^{*})^{+}=\{e\in L^*|\mbox{the white end-vertex of $e$ is to the left side of  $L$}\}$
and $(L^{*})^{-}=\{e\in L^*|\mbox{the white end-vertex of $e$ is to the right side of $L$}\}$.
Then
\begin{equation}\label{dif}
    {\rm flow}_{L}(G, M)=|(L^{*})^{+}\cap M|-|(L^{*})^{-}\cap M|.
\end{equation}

Let $G_1$ be a subgraph of $G$ and ${\rm flow}_{L}(G_1, M):=|(L^{*})^{+}\cap M\cap E(G_1)|-|(L^{*})^{-}\cap M\cap E(G_1)|$.
Then we have
\begin{equation}
      {\rm flow}_{L}(G, M)={\rm flow}_{L}(G_1, M)+{\rm flow}_{L}(G-V(G_1), M).
\end{equation}

\begin{ex}
{\rm (i) Let $N$ be a nanotube with cut segment $L$ and  a perfect matching $M_{1}$ in Fig. \ref{Figure2-3}(a). Then ${\rm flow}_{L}(N, M_{1})=1$.

(ii) Let $H$ be toroidal polyhex $H(5,4,3)$ with   a perfect matching $M_{2}$ and oriented cut cycles $L_{x}$, $L_{y}$  in Fig. \ref{Figure2-3}(b). Then,   ${\rm flow}_{L_{x}}(H, M_{2})=0$
and ${\rm flow}_{L_{y}}(H, M_{2})=-1$.}
\end{ex}

Let $N$ be a matchable nanotube with a cut segment $L$, $M_{1}$, $M_{2}$ be two perfect matchings. We have the following two lemmas.

\begin{lem}[\cite{LZhan26}]\label{one II cycle}
If a contractible cycle $C$ of $N$ is $(M_{1},M_{2})$-alternating, then ${\rm flow}_{L}(C,M_{1})={\rm flow}_{L}(C,M_{2})$. Otherwise, $|{\rm flow}_{L}(C,M_{1})-{\rm flow}_{L}(C,M_{2})|=1$.
\end{lem}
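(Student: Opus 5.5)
The idea is to write the flow difference ${\rm flow}_{L}(C,M_1)-{\rm flow}_{L}(C,M_2)$ as a signed count of crossings of $L$ with $C$, i.e. an algebraic intersection number. Here $C$ is an $(M_1,M_2)$-alternating cycle, and the ``otherwise'' case is when $C$ is non-contractible, so that it encircles the tube. Only edges of $C$ crossed by $L$ contribute, since ${\rm flow}_L(C,M_i)$ counts only edges of $E(C)\cap M_i\cap L^*$. Every edge of $C$ lies in exactly one of $M_1,M_2$, so
\[
{\rm flow}_{L}(C,M_1)-{\rm flow}_{L}(C,M_2)=\sum_{e\in E(C)\cap L^*}\varepsilon(e)\chi(e).
\]
Here $\varepsilon(e)=+1$ if $e\in (L^*)^+$ and $-1$ if $e\in (L^*)^-$, while $\chi(e)=+1$ if $e\in M_1$ and $-1$ if $e\in M_2$.

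Next I fix an orientation of $C$. Because $C$ alternates between $M_1$ and $M_2$ and $G$ is bipartite, all $M_1$-edges of $C$ point from white to black along this orientation, and all $M_2$-edges point from black to white (or the reverse for both). Hence $\chi(e)$ records whether the orientation of $C$ on $e$ goes white$\to$black or black$\to$white. Now $L$ is a dual path, straight inside faces, that crosses $e$ transversally at its midpoint. The white end of $e$ is on the left of $L$ exactly when $L$ crosses $C$ from one fixed side to the other, relative to the orientation of $e$ taken from white to black. Combining the two facts, $\varepsilon(e)\chi(e)=\sigma\cdot\iota(e)$. Here $\sigma\in\{\pm1\}$ is a global constant, and $\iota(e)=\pm1$ is the local intersection sign of $L$ with the oriented cycle $C$ at $e$. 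This identification is the main point to check carefully, and I would verify it with a local picture of one edge and the dual edge crossing it.

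It follows that the flow difference equals $\pm$ the algebraic intersection number of $L$ with $C$ on the cylinder. Intersections occur only at the crossed edges, since $L$ lives in the dual graph and $C$ in $G$.
\begin{itemize}
\item If $C$ is contractible, it bounds the disc $H[C]$, and the endpoints of $L$, which lie at the open-ends, are outside this disc. So $L$ enters and leaves the disc equally often, the signed count is $0$, and the flows are equal.
\item If $C$ is non-contractible, it separates the two open-ends of the cylinder, since in the plane drawing it separates the hole from the exterior face. So $L$ crosses from one side to the other a net of exactly once, and the difference is $\pm1$.
\end{itemize}
The only delicate step is the sign bookkeeping in the identity $\varepsilon(e)\chi(e)=\sigma\iota(e)$. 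The topological part, that the intersection number of a path with a closed curve equals the net change of side, is routine in the plane drawing, using the Jordan curve theorem.
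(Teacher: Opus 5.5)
Your proof is correct, but it is organized differently from the argument the paper relies on. The paper does not reprove this lemma; it quotes it from \cite{LZhan26}. Its own versions of the same computation (the proof of Lemma \ref{one cycle}(i), Claim 1 there, and Lemma \ref{one contractible cycle}) work locally. They list the edges of $L^*\cap E(C)$ in order along $C$ and pair successive ones. Since a path of $C$ between equally coloured vertices has even length, each pair contributes equally to both flows, and the one unpaired edge (when $|L^*\cap E(C)|$ is odd) gives the difference $\pm1$.

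You use the same bipartite parity fact only once, to get the global identity $\varepsilon(e)\chi(e)=\sigma\,\iota(e)$. This identity does hold. With $e$ oriented from white to black, its white end is left of $L$ exactly when $e$ crosses $L$ from left to right. And $\chi(e)$ changes sign exactly when $C$ traverses $e$ from black to white. After that, the algebraic intersection number and the Jordan curve theorem handle all the cancellation, using that the endpoints of $L$ lie in the hole and the exterior face.

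What your route buys is independence from the order of crossings along $C$. As written in the paper, the pairing step presupposes that successive crossings along $C$ go in opposite directions, and this need not happen. The boundary of a thin strip spiralling around the tube is contractible and can meet $L$ in the pattern $+,+,+,-,-,-$ along $C$. Then neither way of pairing consecutive crossings cancels pair by pair, although the signed total you compute is still $0$. Your identity also carries over to the torus, where it reduces Lemma \ref{one cycle} to the homological intersection numbers of $C$ with $L_x$ and $L_y$.

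The pairing approach, in exchange, stays purely combinatorial and avoids orientation conventions. To be airtight, though, it needs a careful choice of pairs, e.g.\ crossings that are consecutive along $L$, which for a contractible $C$ are entry--exit pairs.
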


\begin{lem}[\cite{LZhan26}]\label{two cycle}
 For two disjoint non-contractible  $(M_{1},M_{2})$-cycles $C_{1}$, $C_{2}$, if ${\rm flow}_{L}(C_{1}\cup C_{2},M_{1})={\rm flow}_{L}(C_{1}\cup C_{2},M_{2})$, then one of $C_{1}$ and $C_{2}$ is a proper $M_{1}$-alternating cycle and the other one is improper.
\end{lem}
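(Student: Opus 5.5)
The statement to prove is Lemma \ref{two cycle}: for two disjoint non-contractible $(M_1,M_2)$-alternating cycles $C_1,C_2$ of $N$ with ${\rm flow}_{L}(C_1\cup C_2,M_1)={\rm flow}_{L}(C_1\cup C_2,M_2)$, one of $C_1,C_2$ is proper $M_1$-alternating and the other is improper. The plan is to compute, for a single non-contractible $(M_1,M_2)$-alternating cycle $C$, the difference
\[
\delta(C):={\rm flow}_{L}(C,M_1)-{\rm flow}_{L}(C,M_2),
\]
and to show that $\delta(C)=\pm1$, with the sign determined by whether $C$ is proper or improper $M_1$-alternating. The lemma then follows by additivity of flow over the disjoint subgraphs $C_1$ and $C_2$. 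The hypothesis says $\delta(C_1)+\delta(C_2)=0$. Since each term is $\pm1$, the two terms have opposite signs, so one cycle is proper and the other improper.

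To compute $\delta(C)$, use the plane drawing of $N$. A non-contractible cycle encircles the axis, so in the drawing it separates the hole (one open-end) from the exterior face (the other open-end). The cut segment $L$ is a dual path joining these two faces, so it crosses $C$ an odd number of times, and its net algebraic crossing number with $C$ is $\pm1$. The edges of $C\cap L^*$ alternate between $M_1$ and $M_2$ along $C$. For each crossed edge $e\in E(C)$, its contribution to $\delta$ is $+1$ or $-1$ according to whether $e\in M_1$ or $e\in M_2$, multiplied by the sign recording whether its white end lies left or right of $L$.

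Next, pair up consecutive crossings of $L$ with $C$. A detour of $L$ that enters the region bounded by $C$ and returns bounds, together with an arc of $C$, a region whose contribution cancels. This is essentially the computation behind Lemma \ref{one II cycle}, applied to the contractible cycle formed by the arc and the piece of $L$; alternatively one can argue with a local parity count along the arc. After the cancellations, a single net crossing remains, contributing $\pm1$. Its sign is read off from the orientation convention: a proper $M_1$-alternating cycle has its $M_1$-edges oriented white$\to$black clockwise. Fixing the direction of $L$ (from the hole to the exterior, say), every crossing edge in $M_1$ then has its white end on a fixed side of $L$. Hence $\delta(C)=\varepsilon$ when $C$ is proper and $\delta(C)=-\varepsilon$ when $C$ is improper, where $\varepsilon\in\{\pm1\}$ depends only on the direction of $L$.

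The main obstacle is the cancellation step. One must make rigorous that back-and-forth crossings contribute zero and that the color/side bookkeeping is consistent at every crossing. I would try first to reduce to the case where $L$ crosses $C$ exactly once. To do this, replace $L$ by another cut segment $L'$ with the same endpoints; the flow difference is unchanged, because $L\cup L'$ encloses only hexagonal faces. For each such hexagon $h$, the alternating edges of $C$ entering and leaving $h$ contribute equally on both sides, which is a local version of Lemma \ref{one II cycle}. After this reduction, the single-crossing case is a direct check of the orientation convention.
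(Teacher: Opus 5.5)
Your argument is correct and follows the paper's approach. The paper only cites this lemma, but it proves the torus analogue, Lemma \ref{two non-contractible cycle}(i) via Lemma \ref{one cycle}(i), the same way: each non-contractible $(M_1,M_2)$-alternating cycle has flow difference $\pm1$, with sign fixed by whether it is proper or improper, so a zero total forces one cycle of each kind.

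The step you call the main obstacle is immediate, and it needs neither Lemma \ref{one II cycle} nor your claim that the crossed edges alternate between $M_1$ and $M_2$ along $C$. That lemma does not apply to a closed curve made of an arc of $C$ and a piece of $L$, and the alternation claim is false in general: a hexagon outside $C$ sharing three consecutive edges with $C$ lets $L$ cross the first and third, which lie in the same matching. Instead, orient $C$ so that its $M_1$-edges run from white to black. Then every crossed edge, whether in $M_1$ or in $M_2$, contributes $+1$ or $-1$ to $\delta(C)$ according only to the side of $C$ from which $L$ crosses it. So $\delta(C)$ is the algebraic intersection number of $L$ with this oriented cycle, namely $\pm1$, with the sign reversing between proper and improper.
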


\begin{lem}\label{one contractible cycle}
Let $H$ be a toroidal polyhex with two cut cycles $L_{x}$ and $L_{y}$, $M_{1}$ and $M_{2}$ two perfect matchings of $H$.  If $C$ is an $(M_{1},M_{2})$-alternating contractible cycle of $H$, then ${\rm flow}_{L_{j}}(C,M_{1})={\rm flow}_{L_{j}}(C,M_{2})$ ($j=x,y$).
\end{lem}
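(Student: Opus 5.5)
The plan is to reduce the statement to a parity/crossing count along the cut cycle. Fix $j\in\{x,y\}$. The contractible cycle $C$ bounds the hexagonal system $H[C]$, which is a topological disc on the torus. Since $C$ is $(M_1,M_2)$-alternating, its edges alternate between $M_1$ and $M_2$. Every edge of $C$ crossed by $L_j$ contributes to exactly one of ${\rm flow}_{L_j}(C,M_1)$ and ${\rm flow}_{L_j}(C,M_2)$, with sign determined by which side of $L_j$ its white end lies on. The claim is equivalent to
\[
\sum_{e\in E(C)\cap L_j^*} \epsilon(e)\,\sigma(e)=0,
\]
where $\epsilon(e)=+1$ if $e\in M_1$ and $-1$ if $e\in M_2$, and $\sigma(e)=\pm1$ according as $e\in (L_j^*)^{+}$ or $(L_j^*)^{-}$.

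The key step is to traverse $C$ in a fixed orientation. Walking along $C$, colors alternate, and so do the memberships in $M_1$ and $M_2$. Hence all $M_1$-edges are traversed in the same color direction, say white$\to$black, and all $M_2$-edges black$\to$white. So $\epsilon(e)$ equals $+1$ exactly when $e$ is traversed from white to black. Now $\sigma(e)$ records whether the white end is to the left of $L_j$, and "$e$ traversed white$\to$black" fixes which direction $C$ crosses $L_j$ at $e$. The product $\epsilon(e)\sigma(e)$ is therefore exactly the signed crossing number of $C$ with the oriented curve $L_j$ at that point, up to a global sign. Thus the difference of flows equals $\pm$ the algebraic intersection number of $C$ and $L_j$.

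It then remains to show that this algebraic intersection number is $0$. Because $C$ is contractible, it bounds the disc $H[C]$. The closed curve $L_j$ (a dual cycle, transversal to edges at their midpoints) enters and leaves this disc alternately as it is traversed, and it is closed. So its crossings with $C$ pair up into an entering and a leaving crossing, and these have opposite signs. Equivalently, the homological intersection pairing of a null-homotopic loop with any loop vanishes.

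The main obstacle is making the pairing argument rigorous combinatorially, without appealing to algebraic topology, since $L_j$ may wind through $H[C]$ several times. I would handle this by restricting $L_j$ to the interior of the disc $H[C]$. There it decomposes into dual paths, each starting and ending at crossings of $C$, with the crossing at the start entering and the one at the end leaving. Concretely, along a maximal segment of $L_j$ inside $H[C]$, the first crossed edge of $C$ has its interior side to the left/right opposite to the last one, which gives opposite signs. Summing over the segments yields $0$. This mirrors Lemma~\ref{one II cycle} for nanotubes, and one could alternatively reduce to it by cutting the torus along a non-contractible loop disjoint from $H[C]$. However, such a loop may not exist if $H[C]$ wraps awkwardly, so the segment-pairing argument is the safer route.
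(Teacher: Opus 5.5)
Your argument is correct, but it takes a different route from the paper's. The paper's proof only observes that $|L_j^*\cap E(C)|$ is even and then defers to the argument of Lemma~\ref{one II cycle}. That is the parity-and-pairing device reused in Lemma~\ref{one cycle}(i) and Claim~1: the crossed edges of $C$ are taken consecutively along $C$, and each consecutive pair is shown to contribute equally to both flows.

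You proceed differently in two steps. First, you identify ${\rm flow}_{L_j}(C,M_1)-{\rm flow}_{L_j}(C,M_2)$ with the signed number of crossings of $L_j$ by $C$, where $C$ is oriented so that its $M_1$-edges run white to black. Second, you cancel these crossings by pairing them along $L_j$ rather than along $C$. Every maximal arc of $L_j$ inside the disc $H[C]$ has one entering and one leaving crossing. If $C$ is oriented with $H[C]$ on its left, then $L_j$ crosses $C$ from right to left on entering and from left to right on leaving, so the two crossings have opposite signs. This is what your sentence about the ``interior side to the left/right'' should say.

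Your route uses contractibility exactly where it matters, namely that $C$ separates the torus. Parity alone does not force cancellation. Pairing consecutive crossings along $C$ tacitly requires the arc of $C$ between them to return to the same side of $L_j$. That is not automatic on the torus, because $L_j$ is non-separating: if $L_j$ passes through the disc twice in the same direction, the crossings read entry, entry, exit, exit along $C$.

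Your reformulation also essentially reduces Lemma~\ref{one cycle} to an algebraic intersection count, since for non-contractible $C$ the same signed count is the algebraic intersection number of $C$ with $L_j$. The paper's version is shorter and matches the case analysis it reuses in Lemma~\ref{one cycle}. Your closing remark about cutting along a loop disjoint from $H[C]$ is not needed; the chord-pairing argument is complete as it stands.
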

\begin{proof}
Since  $C$ is contractible cycle,  $|L_{j}^*\cap E(C)|$ ($j=x,y$) is even. By  similar arguments to Lemma \ref{one II cycle} in \cite{LZhan26}, the result holds.
\end{proof}

\begin{lem}\label{necessity}
Let $G$ be a nanotube or a toroidal polyhex and $L$ be the cut segment or cut cycle.   If two perfect matchings $M_{1}$ and $M_{2}$ of $G$
lie in the same connected component of  $R_6(G)$,
then  ${\rm flow}_{L}(G, M_{1})$=
${\rm flow}_{L}(G, M_{2})$.
\end{lem}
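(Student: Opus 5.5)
It suffices to treat a single edge of $R_6(G)$: if $M_1$ and $M_2$ lie in the same component, they are joined by a path $M_1=M^{0}M^{1}\cdots M^{k}=M_2$ in which consecutive matchings differ by a flip along one hexagon. Equality of flows is transitive, so I will show ${\rm flow}_{L}(G,M^{i})={\rm flow}_{L}(G,M^{i+1})$ for each $i$. Write $M:=M^{i}$ and $M':=M^{i+1}$, with $M\triangle M'=E(s)$ for a hexagon $s$ of $G$.

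First, edges outside $s$ are untouched by the flip, so $M\setminus E(s)=M'\setminus E(s)$. By the decomposition ${\rm flow}_{L}(G,M)={\rm flow}_{L}(s,M)+{\rm flow}_{L}(G-V(s),M)$, and the same for $M'$, the second summands coincide: every matching edge crossed by $L$ that is not in $s$ is not incident to $V(s)$, since $s$ is $M$-alternating and hence all vertices of $s$ are matched within $s$. So it remains to show ${\rm flow}_{L}(s,M)={\rm flow}_{L}(s,M')$. The boundary of a hexagon is a contractible $(M,M')$-alternating cycle. For a nanotube this is exactly Lemma \ref{one II cycle}, and for a toroidal polyhex with $L=L_x$ or $L_y$ it is Lemma \ref{one contractible cycle}. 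Either way the two local flows agree, and the lemma follows by induction along the path.

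The only point needing care is the local equality itself, which is hidden inside the cited lemmas. If I had to verify it directly, I would use the fact that a dual path or cycle meets a hexagon $s$ by entering through one edge and leaving through another. Each such passage through $s$ crosses two edges of $s$. A cut segment may also begin or end at a face but never at a hexagon's interior in a way that crosses only one edge, since its ends are at the open-ends. Moreover, $s$ is contractible, so the crossings pair up. For a pair of crossed edges $e,f$ of $s$ traversed consecutively by $L$, the white end of $e$ lies to the left exactly when the white end of $f$ lies to the right, or the reverse, according to the parity of the position of $e$ and $f$ along $s$. If the positions have the same parity, $e$ and $f$ are both in $M$ or both in $M'$, and their contributions have opposite signs. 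If the parities differ, one lies in $M$ and the other in $M'$, with equal signs. In both cases the pair contributes equally to ${\rm flow}_L(s,M)$ and ${\rm flow}_L(s,M')$. This is the step I expect to be the main obstacle, and it is handled by the cited Lemmas \ref{one II cycle} and \ref{one contractible cycle}.
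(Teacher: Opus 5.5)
Your proposal is correct and follows essentially the same route as the paper. You reduce to a single hexagon flip along a path in $R_6(G)$, observe that the flip only changes edges of the contractible hexagon $s$, and invoke Lemma~\ref{one II cycle} (nanotube) or Lemma~\ref{one contractible cycle} (torus) to get equality of the local flows on $s$. Your direct check of that local equality is also sound, and it makes explicit what the paper leaves to the cited lemmas: a dual path or cycle crosses exactly two edges of $s$, and the parity of their positions on $s$ determines whether their contributions cancel within one matching or agree across $M$ and $M'$.
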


\begin{proof}
Suppose that $R_{6}(G)$ has a path $P = M_{1}'
(= M_1)M_{2}'\cdots M_{t-1}'M_{t}'(= M_{2})$
between $M_{1}$ and $M_{2}$, where $M_{i+1}^{'}=M_{i}^{'}\triangle E(S_{i})$
and $S_{i}$ is an $(M_{i}^{'},M_{i+1}^{'})$-alternating hexagon
of $G$, $i=1,2,\ldots,t-1$.
For each $i$,  the boundary of $S_{i}$ is
a contractible cycle of $G$, by Lemmas \ref{one II cycle} and \ref{one contractible cycle}, we have ${\rm flow}_{L}(S_i, M_{i}') =
{\rm flow}_{L}(S_i, M_{i+1}')$.
Thus ${\rm flow}_{L}(G, M_i')= {\rm flow}_{L}(G, M_{i+1}')$ as $M_{i+1}^{'}\triangle M_{i}^{'}= E(S_{i})$,  which implies that ${\rm flow}_{L}(G, M_1) = {\rm flow}_{L}(G, M_2)$.
\end{proof}

\subsection{Ladders of a nanotube and toroidal polyhex}

For a hexagon of a nanotube or toroidal polyhex, the edges have three \emph{edge-directions}, denoted by $a$, $b$, and $c$ (see Fig.  \ref{Figure2-4}). A cycle $C$ of a nanotube or toroidal polyhex is called a \emph{zigzag cycle} if the edges of $C$ have only two edge-directions and appear alternately in the two directions.
Note that if a nanotube $N$ contains an edge parallel to $l_{1}$, then $N$ admits a zigzag cycle; A toroidal polyhex admits zigzag cycles of three types:   $a, c$-edge directions cycles (I-direction cycles \cite{WYZ08}),  $b, c$-edge directions cycle (II-direction cycles), and $a, b$-edge directions (III-direction cycles), for an example see Fig. \ref{Figure2-4}.

\begin{rem}\label{zigzag}
{\rm We can see that the toroidal polyhex $H(p,q,t)$ has $q$ III-direction cycles.  By Lemma 2.1 in \cite{WYZ08},   $H(p,q,t)$ has $\frac{p}{g}$ I-direction cycles (resp. $\frac{p}{g'}$ II-direction  cycles) where $g$ (resp. $g'$) is the smallest positive integer satisfying $gt\equiv0 (\text{mod } p)$ $($resp. $g'(q + t)\equiv0 (\text{mod } p)$$)$.}
\end{rem}

For a perfect matching $M$ of a nanotube or toroidal polyhex, a \emph{ladder} $Z$ of $M$ is an $M$-alternating zigzag cycle. 
For  $\emptyset\not=S\subset V(G)$, let $\partial(S)$ denote the set of edges with precisely one end in $S$.
Clearly, $\partial(V(Z))$ has no edges in $M$ (see Fig. \ref{Figure2-4}).
\begin{figure}
\centering
\includegraphics[scale=0.8]{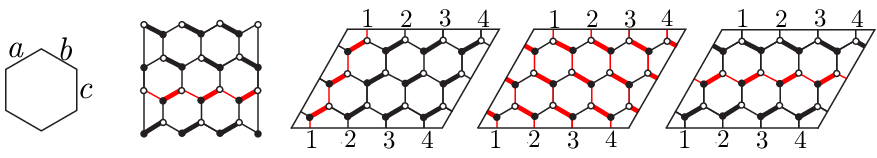}
\caption{\label{Figure2-4}The red cycles are ladders of a perfect matching $M$ (thick edges) in a nanotube or toroidal polyhex.}
\end{figure}
\begin{lem}\label{same ladder}
Let $G$ be a nanotube or toroidal polyhex. If two perfect matchings of $G$  lie in the same connected component of the resonance graph of $G$, then they have precisely the same ladders.
\end{lem}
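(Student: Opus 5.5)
It suffices to show that a single hexagon flip preserves the set of ladders; the statement then follows by induction along a path in $R_6(G)$. So let $M$ and $M'=M\triangle E(s)$ be two perfect matchings of $G$ that differ by a flip along a hexagon $s$, which is $M$-alternating. Let $Z$ be a ladder of $M$, that is, an $M$-alternating zigzag cycle. We will show that $Z$ is also $M'$-alternating and that $E(Z)\cap M=E(Z)\cap M'$. The reverse inclusion is then symmetric, since $M=M'\triangle E(s)$.

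The key structural fact is the one already noted before the statement: $\partial(V(Z))$ contains no edge of $M$. Indeed, every vertex of $Z$ is covered by an edge of $E(Z)\cap M$. Now consider how the hexagon $s$ can meet $Z$. A zigzag cycle is the ``middle line'' of a strip of hexagons. Every hexagon meeting $V(Z)$ lies on one of the two sides of $Z$. Such a hexagon shares with $Z$ either exactly one edge, or a path of length two (two edges of $Z$ at one vertex). The remaining edges at those vertices of the hexagon leave $Z$ and therefore lie in $\partial(V(Z))$. I would verify this local picture directly in the hexagonal lattice: along a zigzag cycle using directions, say, $a$ and $c$, the third-direction edges at the vertices of $Z$ alternate between the two sides of $Z$. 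Hence each hexagon adjacent to $Z$ contains either one edge of $Z$ or two consecutive edges of $Z$, together with one or two edges of $\partial(V(Z))$.

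Case analysis. If $V(s)\cap V(Z)=\emptyset$, the flip does not touch $E(Z)$ or the edges covering $V(Z)$, so $Z$ remains a ladder with the same $M$-edges. Suppose $s$ meets $V(Z)$. Then $s$ contains a vertex $v\in V(Z)$ and an edge $e\in\partial(V(Z))$ incident with $v$ (or, if $s$ shares only a single edge $uv$ with $Z$, edges of $\partial(V(Z))$ at both $u$ and $v$). Since $s$ is $M$-alternating, at every vertex of $s$ exactly one of its two $s$-edges is in $M$. At a vertex $v$ where $s$ enters $Z$ via $e\in\partial(V(Z))\setminus M$, the other $s$-edge at $v$ must be in $M$, and it is an edge of $Z$. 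Following $s$ along its edges in $Z$ and back out through $\partial(V(Z))$, one checks from the local pictures that $s$ would need an edge of $\partial(V(Z))$ in $M$. In the one-shared-edge case, $uv\in M$ forces the two neighbouring $s$-edges out of $M$, and then the next edges of $s$ leave $Z$ again. In the two-shared-edge case $u v w$, exactly one of $uv$, $vw$ is in $M$ since $Z$ is $M$-alternating. The $s$-edge at the endpoint not covered within $s\cap Z$ by an $M$-edge must then be in $M$ and lie in $\partial(V(Z))$, which is a contradiction. Thus no $M$-alternating hexagon meets $V(Z)$ at all, so every flip leaves $Z$ and $M\cap E(Z)$ unchanged.

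The main obstacle is the careful local description of how hexagons sit against a zigzag cycle, uniformly for nanotubes (including near the open ends, where only one side of $Z$ may carry hexagons) and for all three zigzag types in toroidal polyhexes. Small cases also need care, such as a toroidal polyhex in which a hexagon meets the same zigzag cycle twice because of wrap-around with small $p$ or $q$. For these I would argue vertex-by-vertex: every vertex of $s\cap Z$ has its $M$-edge inside $Z$. Since $s$ alternates, this forces the portion of $s$ in $Z$ to be an odd-length run of $Z$ starting and ending with $M$-edges. Such a run, inside a hexagon, can only be a single edge. In that case both adjacent $s$-edges lie in $\partial(V(Z))$ and are not in $M$, and the opposite side of $s$ must then be re-checked to complete the contradiction.
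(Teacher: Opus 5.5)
Your strategy is the same as the paper's. You reduce to one flip along an $M$-alternating hexagon $s$, use that no $M$-edge leaves $V(Z)$, and show that $s$ is disjoint from $Z$. Your handling of the case where $s$ contains two consecutive edges $uv,vw$ of $Z$ is exactly the paper's argument.

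\textbf{The gap.} It lies in the other configuration you admit: $s$ sharing a single edge $uv$ with $Z$. If $uv\in M$, alternation only says that the two $s$-edges at $u$ and $v$ off $Z$ are not in $M$. That is consistent with $\partial(V(Z))\cap M=\emptyset$, so nothing near $Z$ is contradicted. ``Re-checking the opposite side of $s$'' cannot help in general, since that side need not meet $Z$ at all. Your vertex-by-vertex fallback ends at the same configuration (an isolated $M$-edge of $Z$ inside $s$) and leaves it open. So, as written, the proof does not close.

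\textbf{The missing fact.} This configuration cannot occur; the paper uses this when it says that if $Z$ meets $S_i$ then $Z$ contains two adjacent edges of $S_i$. The argument is local:
\begin{itemize}
\item Around a hexagon the edge-directions read $x,y,z,x,y,z$. So if $uv\in E(s)\cap E(Z)$ has direction $a$, the two edges of $s$ adjacent to $uv$ carry the two other directions, one at each end.
\item The one with the second direction $c$ of $Z$ must be the $Z$-edge at that endpoint, since a vertex has at most one edge of each direction.
\item This also follows from the alternation of third-direction edges that you cite. Of the two ends of a shared edge, one has its third edge pointing away from $s$, and there $s$ is the face between the two $Z$-edges.
\item If $s$ contains a vertex of $Z$, one of its two edges there lies in $Z$, because at most one edge at that vertex is off $Z$.
\end{itemize}

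Hence every maximal run of $s$ inside $Z$ has length exactly two, since three consecutive hexagon edges use all three directions. This contradicts the odd run length you derived in your last paragraph. Equivalently, once this fact is in place your two-edge argument covers all cases.

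Keep the vertex-by-vertex phrasing: an edge incident with $V(Z)$ but not in $E(Z)$ is never in $M$, rather than claiming it lies in $\partial(V(Z))$. Phrased that way, the argument also handles the wrap-around cases you were worried about.
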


\begin{proof}
Let $M_{1}$, $M_{2}$ be two perfect matchings of $G$. Suppose $R_{6}(G)$ has a path $P = M_{1}'(= M_1)M_{2}'\cdots M_{t-1}'M_{t}'(= M_{2})$ between $M_{1}$ and $M_{2}$.
For any consecutive two perfect matchings $M_i'$ and $M_{i+1}'$, $M_i\triangle M_{i+1}'$ is the set of  six edges of some hexagon $S_{i}$ in $G$.
Let $E(S_{i})=\{e_{1},e_{2},e_{3},e_{4},e_{5},e_{6}\}$ (see Fig. \ref{Figure2-5}). \begin{figure}[H]
\centering
\includegraphics[scale=0.9]{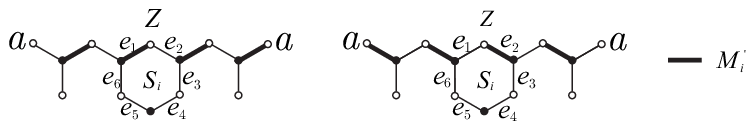}
\caption{\label{Figure2-5}Illustration for  the proof of Lemma \ref{same ladder}.}
\end{figure}
If $M_{i}'$ has a ladder $Z$, then
we claim that $Z$ and $S_{i}$ are disjoint.
Otherwise,  $Z$ contains a pair of adjacent edges of $S_{i}$, say $e_{1},e_{2}$.
If $e_{1}\in M_{i}'$ (see the left graph in Fig. \ref{Figure2-5}), then $e_{3} \in M_{i}'$ since $S_{i}$ is an $(M_{i}',M_{i+1}')$-alternating cycle. But $e_{3}\in \partial(V(Z))$, so $e_{3} \notin M_{i}'$, a  contradiction.
If $e_{1}\notin M_{i}'$ (see the right graph in Fig. \ref{Figure2-5}), then $\{e_{2},e_{4},e_{6}\}\subseteq M_{i}'$. But $e_{6}\in \partial(V(Z))$, so $e_{6} \notin M_{i}'$, a  contradiction.  Hence the claim holds. So  $Z$ belongs to $E(G)\setminus E(S_{i})$. Since $M_{i}'$ and $M_{i+1}'$ are the same restrictions on  $E(G)\setminus E(S_{i})$,  $M_{i+1}'$ has also the ladder $Z$. Similarly, if $M'_{i+1}$ has a ladder $Z$, then $M_{i}'$ has the ladder $Z$.
\end{proof}

\subsection{Main results}

In this paper we  mainly give simple criteria  for two perfect matchings of a nanotube and a toroidal polyhex to belong to the same connected component of their resonance graphs using the flow and ladder.
\begin{thm}\label{N-flow}
Let $N$ be a matchable nanotube with a cut segment $L$. Two perfect matchings of $N$ lie in the same connected component of the resonance graph  $R_6(N)$ if and only if  they have the same  flows across $L$  and  precisely the same ladders.
\end{thm}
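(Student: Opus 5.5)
Necessity follows at once from Lemmas \ref{necessity} and \ref{same ladder}, so the work is sufficiency. Suppose $M_1,M_2$ have equal flow across $L$ and exactly the same ladders. I would argue by induction on $|M_1\triangle M_2|$. The symmetric difference is a disjoint union of $(M_1,M_2)$-alternating cycles, each contractible or non-contractible. If some cycle $C$ is contractible, set $M_1':=M_1\triangle E(C)$. Lemma \ref{contractible cycle} gives a path in $R_6(N)$ from $M_1$ to $M_1'$, and $|M_1'\triangle M_2|<|M_1\triangle M_2|$. So we may assume every cycle of $M_1\triangle M_2$ is non-contractible. These cycles are pairwise disjoint and each encircles the axis, so they are nested, say $C_1,\dots,C_k$ in order along the $x$-axis.

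Next I use the flow. Each $C_i$ crosses $L$ an odd number of times and contributes $\pm1$ to the flow difference according to whether it is proper or improper $M_1$-alternating (cf. Lemma \ref{one II cycle}). Equal total flow then forces equally many proper and improper cycles among the $C_i$. Hence some two consecutive cycles $C_i,C_{i+1}$ have opposite orientation; this is the situation of Lemma \ref{two cycle}. Let $R$ be the annular region between them, and let $N_R$ be the subgraph of $N$ induced on $R$ together with its boundary. In the plane drawing, $N_R$ is a plane bipartite graph whose exterior boundary is one of the $C$'s and whose hole boundary is the other. The restriction of $M_1$ is a perfect matching of $N_R$ for which the outer cycle is (after relabeling) proper and the inner one improper.

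The goal is to flip both cycles at once by a sequence of hexagon flips inside $R$, producing $M_1'':=M_1\triangle(E(C_i)\cup E(C_{i+1}))$ and reducing $|M_1\triangle M_2|$. I would apply Lemma \ref{path} to $N_R$ with $n=1$, taking the inner cycle as the boundary $C_1$ of the hole face. Two things must then be checked. First, the hole face must not be flipped: Lemma \ref{path} flips only faces in the region strictly between the cycles, so all flips are hexagons. Second, $N_R$ must be elementary, and this is the main obstacle.

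To handle elementarity, I would decompose $N_R$ into its elementary components via its forbidden edges. If $N_R$ is not elementary, Lemma \ref{forbidden edge} applied around hexagon faces should propagate forbidden edges along a chain of hexagons. This chain should produce a non-contractible cycle $Z$ in $R$ with $\partial(V(Z))$ consisting of forbidden edges. Because every face is a hexagon, such a separating structure should be a zigzag cycle that is alternating for every perfect matching of $N_R$. I would take $Z$ to be the innermost such cycle. Then the restrictions of $M_1$ and $M_2$ to the part between $C_i$ and $Z$ would both contain an $M$-alternating $Z$. In particular $Z$ is a ladder of one of $M_1,M_2$, but after flipping $C_i$ its matching on $Z$ changes relative to the other, contradicting the hypothesis of precisely the same ladders. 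If that contradiction does not arise directly, I would instead replace the pair $(C_i,C_{i+1})$ by $(C_i,Z)$ or $(Z,C_{i+1})$, which are still oppositely oriented, and recurse on a smaller annulus until the region is elementary. Making this ladder argument precise is the crux. I expect it to need care with the fact that the end cycles $C_1,C_2$ of $N$ are not hexagons, and with the case where $N$ has no edges parallel to $l_1$, in which no zigzag cycles exist and elementarity should follow directly.

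Once both cycles are flipped, the induction completes the proof.
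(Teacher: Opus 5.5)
\textbf{There is a genuine gap: elementarity of $N_R$ is not established, and the argument you sketch for it does not work.} Your outline otherwise matches the paper's. You remove contractible cycles by Lemma \ref{contractible cycle}, use the flow to pair a proper with an improper non-contractible cycle, and flip the pair by Lemma \ref{path} on the sub-nanotube between them. The unproved step is the one you flag, which the paper isolates as Lemma \ref{elementary nanotube}.

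\textbf{Why the interior-ladder argument fails.} If $Z$ is a zigzag cycle in $R$ distinct from $C_i$ and $C_{i+1}$, then $Z$ is disjoint from every cycle of $M_1\triangle M_2$. So $M_1$ and $M_2$ agree on $Z$. It is then a \emph{common} ladder, and no contradiction arises; flipping $C_i$ does not touch $Z$.

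\textbf{Why the fallback fails.} Replacing $(C_i,C_{i+1})$ by $(C_i,Z)$ would require moving from $M_1$ to $M_1\triangle E(C_i)\triangle E(Z)$ by hexagon flips. That changes the matching on the ladder $Z$, which Lemma \ref{same ladder} forbids. So the smaller annulus cannot be elementary either, and the recursion never terminates successfully.

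\textbf{What is missing.} You need two ingredients:
\begin{itemize}
\item the observation that every edge of the boundary cycles $C_i,C_{i+1}$ lies in $M_1\cup M_2$, so no boundary edge of $N_R$ is forbidden;
\item a mechanism that pushes forbidden edges all the way to that boundary.
\end{itemize}

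\textbf{How the paper supplies the mechanism.} By Lemma \ref{different edge-directions}, a non-elementary nanotube has a forbidden edge $e$ adjacent to a nice cycle. This can be arranged so that the two $M$-edges covering the ends of $e$ have different edge-directions. Lemma \ref{forbidden edge}, applied hexagon by hexagon, then makes every edge crossed by the perpendicular bisector $L_{s,e}$ forbidden. Two cases follow.
\begin{itemize}
\item If $e$ is not parallel to the axis, $L_{s,e}$ runs into $C_i$ or $C_{i+1}$. This gives a forbidden boundary edge, a contradiction.
\item If $e$ is parallel to the axis, $L_{s,e}$ closes up. It produces a zigzag cycle that is alternating in every perfect matching. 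Propagating outward, every axial edge of $N_R$ becomes forbidden, so $C_i$ and $C_{i+1}$ consist only of oblique edges and are themselves zigzag cycles. Then $C_i$ is a ladder of both $M_1$ and $M_2$ with complementary matchings, contradicting ``precisely the same ladders''.
\end{itemize}
The contradiction comes from forbidden edges reaching the boundary, not from a ladder in the interior. Without this step the proof of sufficiency is incomplete.
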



\begin{thm}\label{main result}
Let $H$ be a toroidal polyhex with two cut cycles $L_{x}$ and $L_{y}$. Two perfect matchings of $H$ lie in the same connected component of the resonance graph  $R_{6}(H)$ if and only if they have the same  flows across the cut cycles  and precisely the same ladders.
\end{thm}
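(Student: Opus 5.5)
Necessity is already available: Lemma \ref{necessity} gives equality of flows across $L_x$ and $L_y$, and Lemma \ref{same ladder} gives equality of ladders. The work is sufficiency. Let $M_1,M_2$ have equal flows across $L_x,L_y$ and the same ladders. We induct on $|M_1\triangle M_2|$, or on the number of $(M_1,M_2)$-alternating cycles, and aim to move $M_1$ within its component to some $M_1'$ with $|M_1'\triangle M_2|<|M_1\triangle M_2|$. By Lemma \ref{contractible cycle}, every contractible cycle of $M_1\triangle M_2$ can be removed by flips inside $H[C]$, which do not touch the other cycles. We may therefore assume that all cycles $C_1,\dots,C_k$ of $M_1\triangle M_2$ are non-contractible. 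They are disjoint simple closed curves on the torus, so they all represent the same primitive homotopy class $\pm(m,n)\in\pi_1(T^2)=\mathbb{Z}^2$. Orient each $C_i$ so that its $M_1$-edges go from white to black.

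The first key step is a flow computation. For one such cycle, ${\rm flow}_{L_j}(C_i,M_1)-{\rm flow}_{L_j}(C_i,M_2)$ equals the algebraic intersection number of the oriented $C_i$ with $L_j$, up to a fixed sign. I would prove this the way Lemma \ref{one II cycle} is proved: each crossing of $L_j$ by an edge of $C_i$ contributes $\pm1$, and the signs telescope, so the difference counts net crossings. These intersection numbers are $(\varepsilon_i n,\ \varepsilon_i m)$, up to sign conventions, with $\varepsilon_i=\pm1$ recording the orientation. Equal total flows then force $\sum_i\varepsilon_i=0$, so $k$ is even and the cycles split equally into two orientation classes. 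Disjoint parallel curves cut the torus into annuli, and since the $\varepsilon_i$ sum to zero, some two cyclically consecutive curves $C_i,C_{i+1}$ have opposite orientations. This is the torus analogue of Lemma \ref{two cycle}.

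The second step treats the annulus $A$ bounded by $C_i$ and $C_{i+1}$ and containing no other cycle of $M_1\triangle M_2$. The region $A$ is a nanotube-like planar bipartite graph. In a plane drawing of it, one of the two boundary cycles is proper and the other improper, because of the opposite orientations. If $A$ contains at least one hexagon and $M_1|_A$ makes the relevant subgraph elementary, Lemma \ref{path} (with $C_0$ the outer cycle and $C_1$ the hole) flips faces inside $A$ to reach $M_1\triangle(E(C_i)\cup E(C_{i+1}))$. This strictly reduces the symmetric difference with $M_2$. When the subgraph inside $A$ is not elementary, I would apply Lemma \ref{forbidden edge} and Lemma \ref{unique} to cut away forbidden edges, replacing $C_i$ or $C_{i+1}$ by a closer alternating non-contractible cycle inside $A$, and then repeat.

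The main obstacle is the degenerate case where $A$ has no interior hexagons, so $C_i$ and $C_{i+1}$ bound a thin strip, and more generally any case where the alternating cycle is forced to be a zigzag cycle. Here the ladder hypothesis has to be used. I expect to show that if no flips are available, then $C_i$ (or $C_{i+1}$) is a zigzag cycle all of whose incident cut edges $\partial(V(C_i))$ avoid $M_1$; that is, $C_i$ is a ladder of $M_1$ but not of $M_2$ after switching, or the reverse. That contradicts the equal-ladders assumption. Making this rigorous requires the Remark \ref{zigzag} description of zigzag cycles in each of the three directions, and an argument that a strip of width zero between two oppositely oriented parallel alternating cycles consists of two adjacent zigzag cycles joined by a matching. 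I would check this case by case on the three edge directions.
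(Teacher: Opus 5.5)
Your reduction is sound and follows the paper's route. You remove contractible cycles by Lemma~\ref{contractible cycle}, observe that the remaining cycles all represent one primitive class, and show that the flow condition forces an even number of them with opposite orientations. Your flow step is correct and cleaner than the paper's column decomposition in Lemmas~\ref{one cycle} and~\ref{two non-contractible cycle}. Orienting each $C_i$ so that its $M_1$-edges run white to black, every edge of $E(C_i)\cap L_j^*$ contributes $\pm1$ to ${\rm flow}_{L_j}(C_i,M_1)-{\rm flow}_{L_j}(C_i,M_2)$ according to the direction in which $C_i$ crosses $L_j$, so the difference is an algebraic intersection number. Choosing a cyclically consecutive pair is also fine, though not needed.

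The gap is the one step that actually uses the ladder hypothesis. You must show that the nanotube $N$ bounded by $C_i$ and $C_{i+1}$ is elementary, since otherwise Lemma~\ref{path} is unavailable. Your plan for the non-elementary case does not work.
\begin{itemize}
\item ``Replacing $C_i$ or $C_{i+1}$ by a closer alternating cycle'' yields no flip sequence from $M_1$ to $M_1\triangle(E(C_i)\cup E(C_{i+1}))$. These two cycles are fixed components of $M_1\triangle M_2$ and must be flipped exactly as they are.
\item The ``degenerate'' case you single out is empty: the region between two vertex-disjoint cycles is a nonempty union of hexagonal faces.
\item Non-elementary annuli can be arbitrarily wide. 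Between two parallel zigzag cycles with any number $r\ge1$ of rings of hexagons between them, all transverse edges are forbidden. So a case check on thin strips cannot suffice.
\end{itemize}

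What is needed is the argument of Lemma~\ref{elementary nanotube}, via Lemma~\ref{different edge-directions}.
\begin{itemize}
\item If $N$ is not elementary, choose a forbidden edge $e$ adjacent to the union of nice cycles. Choose a perfect matching $M$, switching along a nice cycle if necessary, whose edges covering the ends of $e$ have different directions.
\item Applying Lemma~\ref{forbidden edge} hexagon by hexagon, every edge met by the perpendicular bisector of $e$ is forbidden.
\item Every edge of $C_i\cup C_{i+1}$ lies in $M_1$ or $M_2$, so this line cannot reach the boundary. It must therefore close up around the tube.
\item A counting argument on the two zigzag cycles flanking this ring of hexagons shows they are alternating in every perfect matching, with their outward transverse edges forbidden. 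This propagates ring by ring out to both ends.
\item Hence $C_i$ and $C_{i+1}$ are zigzag cycles that are ladders of both $M_1$ and $M_2$ with different matched edges, contradicting the equal-ladder assumption.
\end{itemize}
Only then does Lemma~\ref{path} apply. If the outer boundary in your plane drawing is the improper one, interchange $M_1$ and $M_1\triangle(E(C_i)\cup E(C_{i+1}))$.
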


\section{Proof of Theorem \ref{N-flow}}
In order to show Theorem \ref{N-flow}, we first provide some useful lemmas. Let $s$ be a hexagon of a nanotube $N$ and $e$ be an edge of $s$.  Let $L_{s,e}$ be an oriented  perpendicular
bisector of $e$ such that it starts at the midpoint of $e$, passes first  through $s$, and ends at $e$ or the boundary of $N$ (see Fig. \ref{Figure2-7}).

\begin{lem}\label{different edge-directions}
Let $N$ be a non-elementary matchable nanotube. Then $N$ admits  a forbidden edge $e$  and a perfect matching $M$ so that the two edges $e_{1}$ and $e_{2}$ of $M$ covering the end vertices of $e$ have different edge-directions; Further, let $s_{1}$ be the hexagon of $N$ containing $e$, $e_1$ and $e_2$. Then the edges of $N$ which intersect $L_{s_{1},e}$ are forbidden edges.
\end{lem}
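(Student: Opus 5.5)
The plan has three parts: produce a forbidden edge $e$ with the required perfect matching $M$, show that $e_1$ and $e_2$ can be taken in different edge-directions, and then propagate forbiddenness along $L_{s_1,e}$.

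\textbf{Step 1: a forbidden edge in some face.} Since $N$ is non-elementary and matchable, it is connected, so $N$ has a forbidden edge $f$. Take any hexagon $s$ containing $f$, and fix a perfect matching $M$. The edge $f$ is not in $M$, so its two end vertices are covered by $M$-edges $g_1$, $g_2$.

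\textbf{Step 2: getting $e_1$ and $e_2$ in different directions.} In a hexagon, $g_1,g_2$ are the two edges of $s$ at distance one from $f$, or edges leaving $s$.
\begin{itemize}
\item If both $g_1,g_2$ lie on $s$, they have different edge-directions automatically, since the edges of a hexagon around a path of length three use three directions.
\item If both leave $s$, each is parallel to one of the two hexagon directions other than that of $f$. These are still different, because the two exterior edges at the ends of an edge of a hexagon point in different directions.
\item In the mixed case, the edges have distinct directions by a similar local check.
\end{itemize}
So the direction condition seems essentially automatic once $s_1$ is chosen as the hexagon containing $e,e_1,e_2$. The main work is making sure such a hexagon exists, meaning $e_1,e_2$ lie on a common face with $e$. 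If $g_1,g_2$ leave $s$ on the two sides, we switch to the other hexagon containing $f$ or to the adjacent edge. Concretely, I would use Lemma~\ref{forbidden edge} on the face $s$: it yields a forbidden edge of the opposite parity on $s$. Iterating around $s$, I would pick a forbidden edge $e$ on $s$ whose $M$-covering edges both lie in $s$ or on one adjacent hexagon. Here I would also use that boundary faces (the open ends) have length at least $4$ to avoid degenerate cases.

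\textbf{Step 3: propagation along $L_{s_1,e}$.} Walk along $L_{s_1,e}$. It first crosses $s_1$ and exits through the edge $e'$ of $s_1$ opposite to $e$, then passes into the next hexagon $s_2$ through the edge opposite $e'$, and so on. Note that $e_1,e_2$ are the two edges of $s_1$ adjacent to $e$, and $e'$ is parallel to $e$.

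I would argue inductively that each crossed edge is forbidden. Suppose $e$ is forbidden and $e_1,e_2\in M$ lie on $s_1$. Then $e'$ cannot be in any perfect matching $M'$. Otherwise, consider $M\triangle M'$ together with planarity of the nanotube drawing: the alternating cycle through $e'$ would have to use edges around $s_1$, and combining it with $M$ would give a perfect matching containing $e$, a contradiction. A cleaner route is to apply Lemma~\ref{forbidden edge} to $s_1$ with its parity. The forbidden edges on a face alternate in a controlled way, and the two $M$-edges $e_1,e_2$ being allowed forces the opposite edge $e'$ into the forbidden class.

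Having shown $e'$ is forbidden, the configuration in $s_2$ repeats: the edges of $M$ covering the ends of $e'$ are the edges of $s_2$ adjacent to $e'$, by the same argument with $e'$ in the role of $e$. Induction continues until the bisector returns to $e$ (by periodicity around the tube) or hits the boundary.

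\textbf{Main obstacle.} I expect Step 3 to be the hardest step, specifically justifying that $e'$ is forbidden from the local configuration. I would first try a parity argument on Lemma~\ref{forbidden edge} applied to $s_1$, combined with the fact that the edges of $M$ covering the ends of $e$ exclude the other possibilities.
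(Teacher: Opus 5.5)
\textbf{There is a genuine gap in Step 2, which carries the whole first assertion.}

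Your ``mixed case'' claim is false. For an edge $f=xy$ lying in two hexagons, each of those hexagons contains one further edge at $x$ and one at $y$, and these two have different directions. However, the edge at $x$ in one hexagon is parallel to the edge at $y$ in the other. So $g_1,g_2$ have different directions exactly when they lie on a common hexagon with $f$. The mixed case is precisely the case $g_1\parallel g_2$.

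More seriously, this cannot be repaired by searching around $s$ with $M$ fixed, because for some perfect matchings the first assertion is simply false. Take the nanotube consisting of one ring of $n\ge 3$ hexagons whose two open-ends are the zigzag perimeters.
\begin{itemize}
\item The degree-2 vertices of an open-end form one colour class of that cycle, and all their neighbours lie on it. Hence each open-end must be matched within itself.
\item Therefore the $n$ edges joining the two open-ends are forbidden. All other edges are allowed, since each open-end is an even cycle with two alternating matchings.
\item The set $M$ of all edges of one oblique direction is a perfect matching, because every vertex lies on an open-end and meets one edge of each oblique direction.
\end{itemize}
For this $M$, at every forbidden edge the two covering $M$-edges are parallel. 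Iterating Lemma~\ref{forbidden edge} around $s$ only bounces between the two forbidden edges of the ring, both in the mixed configuration. So the perfect matching in the lemma has to be \emph{chosen}, and your plan never changes $M$.

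That choice is the missing idea, and it is how the paper argues.
\begin{itemize}
\item Let $N'$ be the union of all nice cycles. It is nonempty, since $N$ has at least two perfect matchings by Lemma~\ref{unique}.
\item By connectivity, pick $e=xy\notin N'$ sharing the end $x$ with an edge $e_1$ of a nice cycle $C$. Take $M$ such that $C$ is $M$-alternating and $e_1\in M$.
\item Then $e$ is forbidden: it lies on no nice cycle and is adjacent to an allowed edge.
\item The $M$-edge $e_2$ at $y$ is not on $C$. Otherwise the arc of $C$ from $x$ starting with $e_1$ has odd length and ends at $y$ with $e_2$, and together with $e$ it gives an $M$-alternating cycle through $e$.
\item If $e_1\parallel e_2$, pass to $M\triangle E(C)$. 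Now $x$ is covered by the other edge of $C$ at $x$, which has the third direction, while $y$ is still covered by $e_2$.
\end{itemize}
(In the ring example this is exactly switching one open-end.)

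Your Step 3 is essentially the paper's argument. In $s_1$, the three edges in the class opposite to $e$ are $e_1$, $e_2$ and the opposite edge $e'$, so Lemma~\ref{forbidden edge} makes $e'$ forbidden. The inductive step should be stated explicitly rather than ``by the same argument''. The ends of $e'$ cannot be matched along $s_1$, because their neighbours there are covered by $e_1,e_2$. They cannot be matched by $e'$ either. So they are matched by their third edges, which are exactly the edges of $s_2$ adjacent to $e'$. The detour through $M\triangle M'$ and planarity is unnecessary.
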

\begin{proof}
Since $N$ is a bipartite graph with a perfect matching,  $N$ has at least two perfect matchings. Otherwise, by Lemma \ref{unique}, $N$ has a 1-degree vertex, a contradiction.
Thus, $N$ has a nice cycle. Let  $N'$ be  the union of all nice cycles of $N$.  Since $N$ is non-elementary,  $N$ has a forbidden edge, not in $N'$.
So $N' \subset N$. Since $N$ is connected, we can choose an edge $e$ of $N$ not in  $N'$ such that $e$ is adjacent to one edge, say $e_{1}$, of $N'$. Then $e$ is a forbidden edge and  $e_{1}$ is in an $M$-alternating cycle $C$ for a perfect matching $M$  of $N$.
Let $x$ be the common end-vertex of $e$ and $e_{1}$ and let $y$ be the other end-vertex of $e$. Let $e_{2}$ be the edge of $M$ which covers $y$. We claim that $e_{2}$ is not an edge of $C$. Otherwise,  $e$ joins two distinct colored vertices of $C$  since $N$ is a bipartite graph. Hence $e$ and the part of  $C$ form an $M$-alternating cycle, a contradiction.
If $e_{1}$ and $e_{2}$ have different edge-directions, then we are done. Otherwise, for another perfect matching $M \triangle E(C)$   of $N$, the result holds.

\begin{figure}[ht]
\centering
\includegraphics[scale=0.8]{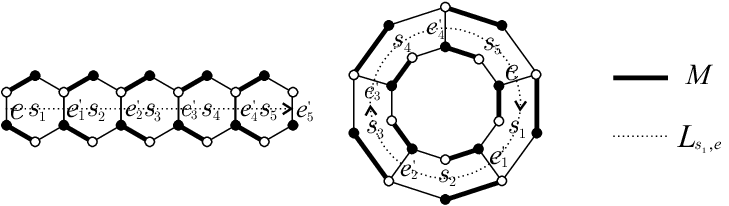}
\caption{\label{Figure2-6} Illustration for the proof of Lemma \ref{different edge-directions}.}
\end{figure}
Next, we show that the edges of $N$ which intersect $L_{s_{1},e}$ are forbidden edges. Let $e, e_1', e_2', \ldots, e_n'$ be the edges of $N$ which intersect $L_{s_{1},e}$ in turn. Then  either $e_n'$ is a boundary edge of $N$ or $e_n'=e$ (see Fig. \ref{Figure2-6}). Let   $s_1, s_2, \ldots, s_n$  be a sequence of hexagons such that   $s_i$ and $s_{i+1}$ have a common edge $e_i'$  for $i = 1, \ldots, n-1$. Since $e$ is a forbidden edge of $N$ and $e_1,e_2\in M$, by Lemma \ref{forbidden edge} to hexagon $s_1$ we have that $e'_1$ is a forbidden edge of $N$. Hence the two edges of $s_2$ that adjacent to $e_1'$ must belong to $M$.  By Lemma \ref{forbidden edge} to hexagon $s_2$ we have that $e'_2$ is a forbidden edge of $N$.
Repeating the above process we arrive at that  the two edges of  each $s_i$  ($2\leq i\leq n$) that adjacent to $e_{i-1}'$ must belong to $M$ and each $e'_i$ is a forbidden edge of $N$, for $1\leq i\leq n$.
\end{proof}

\begin{lem}\label{elementary nanotube}
Let $N$ be a nanotube, $M_{1}$ and $M_{2}$ two perfect matchings of $N$ such that the open-ends $C_{1}$ and $C_{2}$ are $(M_{1},M_{2})$-alternating cycles.
If $M_{1}$ and $M_{2}$  have precisely the same ladders,  then $N$ is an elementary nanotube.
\end{lem}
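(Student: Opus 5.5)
I will argue by contradiction. Suppose $N$ is not elementary. Since $M_1$ is a perfect matching, $N$ is matchable, so Lemma \ref{different edge-directions} applies. It gives a forbidden edge $e$, a perfect matching $M$, and the hexagon $s_1$ containing $e$, $e_1$, $e_2$, such that every edge of $N$ crossed by the ray $L_{s_1,e}$ is forbidden. The plan is to use this ray of forbidden edges to produce a ladder that lies in one of $M_1$, $M_2$ but not the other. That contradicts the hypothesis that they have precisely the same ladders.

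First I will use the open-ends. Each of $C_1$ and $C_2$ is $(M_1,M_2)$-alternating, so every edge of $C_1\cup C_2$ lies in $M_1$ or in $M_2$. Hence no boundary edge of $N$ is forbidden. By Lemma \ref{different edge-directions}, $L_{s_1,e}$ stops either at a boundary edge or back at $e$. A boundary edge would be forbidden, so the ray must return to $e$: it winds once around the tube and closes up. The crossed edges $e,e_1',\dots,e_n'(=e)$ are then pairwise parallel, and they are all perpendicular to the closed dual path, which is parallel to $\overrightarrow{OA}$.

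Next I will examine the hexagons $s_1,\dots,s_n$ of this closed dual path. The proof of Lemma \ref{different edge-directions} shows that in each $s_i$ the two edges adjacent to the forbidden edge $e_{i-1}'$ lie in $M$. So each $s_i$ contributes two $M$-edges that are not parallel to $e$, one on each side of the forbidden edges. Along each side, these $M$-edges together with the connecting non-$M$ edges form a cycle encircling the tube, made of edges in the two directions other than that of $e$. That is, each side is a zigzag cycle, and it is $M$-alternating, so it is a ladder of $M$. Equivalently, once the forbidden band of parallel edges is deleted, $N$ splits along two zigzag cycles $Z$ and $Z'$ bounding the band. Each vertex of the band is matched within $Z\cup Z'$ in every perfect matching, since all edges crossing the band are forbidden.

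Then I will show that $Z$, and likewise $Z'$, is a ladder of every perfect matching, in particular of $M_1$ and $M_2$. Every edge of $\partial(V(Z))$ on the band side is forbidden. Parity in the zigzag structure should force the edges on the other side to be forbidden as well; alternatively, $Z$ separates $N$, and a counting of colour classes forces $Z$ to be alternating in each perfect matching.

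I expect this step to be the main obstacle. The difficulty is that "same ladders" is not an immediate contradiction by itself. The contradiction must come from the open-ends: $C_1$ and $C_2$ lie on opposite sides of the band, and each is $(M_1,M_2)$-alternating. I will compare $\mathrm{flow}_L$ restricted to the region between $C_1$ and $Z$ for $M_1$ and for $M_2$, using Lemma \ref{two cycle}-type reasoning. Since $Z$ is alternating for both matchings and $M_1$, $M_2$ agree on $Z$ (a shared ladder must carry the same matching edges), the cycle $C_1$ cannot be $(M_1,M_2)$-alternating alone in that region. This is because $M_1\triangle M_2$ restricted to that region must consist of cycles whose flows balance, and $C_1$ would be the only non-contractible one. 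Lemma \ref{one II cycle} says its flow changes by exactly $1$, which gives a contradiction. Making this flow and parity bookkeeping rigorous, including checking that both sides of the band really are zigzag cycles, is where I expect most of the work to lie.
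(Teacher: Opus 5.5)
Your first steps agree with the paper. Every edge of $C_1\cup C_2$ lies in $M_1\cup M_2$, so the ray $L_{s_1,e}$ cannot stop at a boundary edge and must close up around the tube. The zigzag cycles $Z$, $Z'$ bounding the band it crosses are then alternating in every perfect matching; your colour-counting idea works, since every vertex of one colour class of $Z$ has its third edge in the forbidden band.

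The gap is the final step. The lemma has no hypothesis on flows, and nothing you have established forces the flows of $M_1$ and $M_2$ to balance in the region between $C_1$ and $Z$. Nor do you know that $C_1$ is the only non-contractible cycle of $M_1\triangle M_2$ there. A concrete configuration shows the flow route cannot work. Suppose $C_1$ is the zigzag cycle one row of hexagons beyond $Z$, and $M_1,M_2$ agree on $Z$. Then the edges parallel to $e$ joining $Z$ to $C_1$ are forbidden, every vertex of $C_1$ is matched along $C_1$, and $M_1,M_2$ can use the two complementary matchings of $C_1$. In that region $M_1\triangle M_2$ is exactly $C_1$ and its flow difference is $\pm1$. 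Every ingredient your argument uses holds here: the ends are alternating, $Z$ is a common ladder, and Lemmas \ref{one II cycle} and \ref{two cycle} apply. So no contradiction can come from flow bookkeeping. What rules the configuration out is the ladder hypothesis applied to $C_1$ itself.

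The missing idea is to keep propagating forbiddenness outward until you reach the ends. Because $Z$ is alternating in every perfect matching, all edges of $\partial(V(Z))$ are forbidden, in particular the edges parallel to $e$ on the far side of $Z$. These are not boundary edges, so the next row of hexagons lies in $N$. Its far zigzag cycle has all vertices of one colour incident with these forbidden edges, hence is again alternating in every perfect matching. Iterate on both sides of the band. A boundary cycle cannot leave a zigzag layer, since that would use a forbidden edge, so you obtain that $C_1$ and $C_2$ are zigzag cycles alternating in every perfect matching. Thus $C_1$ is a ladder of both $M_1$ and $M_2$. But $M_1\cap E(C_1)$ and $M_2\cap E(C_1)$ are complementary because $C_1$ is $(M_1,M_2)$-alternating, so $M_1$ and $M_2$ do not have precisely the same ladders. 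This is the paper's argument, and it uses no flows at all.
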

\begin{proof}
Suppose, to the contrary, that $N$ is not elementary. Then, by Lemma \ref{different edge-directions},  $N$ has a forbidden edge $e$ and a perfect matching $M$  so that two edges $e_{1},e_{2}\in M$ which cover the end vertices of $e$ have different edge-directions. Since $e$ does not lie in the boundary, $N$ admits a hexagon $s$ containing $e,e_{1},e_{2}$. Let $E'$ be the set of edges of $N$ intersected by $L_{s,e}$ and $e_{n}$ the last an edge. Then the edges of $E'$ have the same edge-direction and $e,e_n\in E'$.
By Lemma \ref{different edge-directions}, we have that all the edges of $E'$  are forbidden edges.  If $e$ is not parallel to $l_{1}$, then $e_{n}$ is in $C_{1}$ or $C_{2}$ (see Fig. \ref{Figure2-7}(b-c)),
a contradiction. Hence $e$ is parallel to $l_{1}$ and $e=e_n$, say vertical edge  (see Fig. \ref{Figure2-7}(a)).
\begin{figure}[ht]
\centering
\includegraphics[scale=0.7]{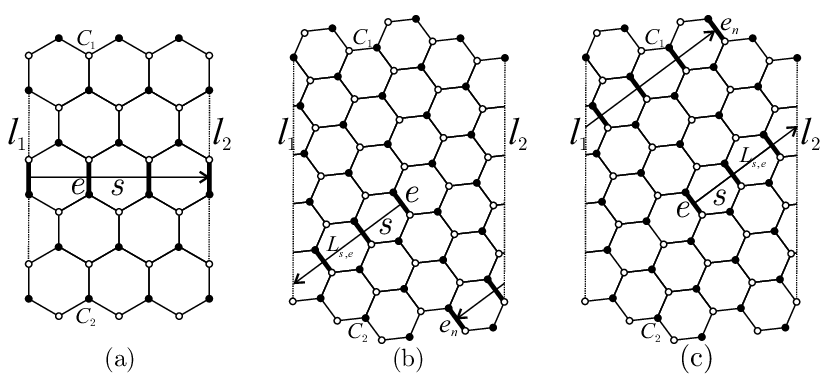}
\caption{\label{Figure2-7} Illustration for the proof  of Lemma \ref{elementary nanotube}.}
\end{figure}
Let $C$ be a zigzag cycle whose edges are oblique and incident with the white  end-vertices of all edges of $E'$. Since the edges of $E'$ are forbidden, $C$ is an $M'$-alternating cycle for any perfect matching $M'$ of $N'$. Hence the vertical edges incident with vertices of $C$ are forbidden edges, which are thus non-boundary edges.
Repeating this process, finally we arrive in that all vertical edges of $N'$ are forbidden and non-boundary edges, and all oblique edges form disjoint $M'$-alternating zigzag cycles.
Thus $C_{1}$ and $C_{2}$ are $M'$-alternating zigzag cycles.
Since $C_{1}$ and $C_{2}$ are $(M_{1},M_{2})$-alternating cycles,
$M_{1}$ and $M_{2}$ have different ladders in $C_{1}$ and $C_{2}$,
 contradicting  that $M_{1}$ and $M_{2}$ have same ladders.
\end{proof}

\noindent{\bf Proof of Theorem \ref{N-flow}.} Let $M_{1}$ and $M_{2}$ be two perfect matchings of a nanotube $N$.  If $M_{1}$ and $M_{2}$  lie in the same connected component of $R_6(N)$, then  ${\rm flow}_{L}(N,M_{1})={\rm flow}_{L}(N,M_{2})$ by Lemma \ref{necessity}, and $M_{1}$ and $M_{2}$ have precisely the same ladders by Lemma \ref{same ladder}.   So the necessity  holds.

Conversely, suppose that ${\rm flow}_{L}(N,M_{1})= {\rm flow}_{L}(N,M_{2})$ for cut segment $L$ and $M_{1}$, $M_{2}$ have precisely the same ladders. We shall show that $M_{1}$ and $M_{2}$ lie in the same connected component of $R_6(N)$. It suffices to show that $R_{6}(N)$ has a path between $M_{1}$ and $M_{2}$.
Let $\mathcal{C}=M_{1}\triangle M_{2}=\cup_{i=1}^k C_{i}$, $k\geq0$, where the $C_{i}$'s are disjoint $(M_1,M_2)$-alternating cycles. We proceed by induction on the number $k$.
If $k=0$, it is trivial.
Next, let $k\geq 1$. If $\mathcal{C}$ contains a contractible cycle $C$, then $H[C]$ is a hexagonal system. Let $M_{3}=M_{1}\triangle E(C)$. Then $M_{3}$ is also a perfect matching of $N$. Since $M_{1}\triangle M_{3}= E(C)$, by Lemma \ref{contractible cycle}, $R_{6}(N)$ has a path between $M_{1}$ and $M_{3}$.
By Lemma \ref{necessity}, ${\rm flow}_{L}(N,M_{3})= {\rm flow}_{L}(N,M_{1})={\rm flow}_{L}(N,M_{2})$. By Lemma \ref{same ladder}, $M_{1},M_{2},M_{3}$ have precisely the same ladders.
Since $M_{3}\triangle M_{2}=\mathcal{C}\setminus\{C\}$, by the induction hypothesis, $R_{6}(N)$ has a path
between $M_{3}$ and $M_{2}$. Thus $R_{6}(N)$ has a path between $M_1$ and $M_{2}$.

So suppose that $\mathcal C$ has no contractible cycles.
Since ${\rm flow}_{L}(N,M_{1})={\rm flow}_{L}(N,M_{2})$, ${\rm flow}_{L}(\mathcal{C}, M_{1})={\rm flow}_{L}(\mathcal{C},M_{2})$ as $M_{1}\triangle M_{2}=\mathcal{C}$, which implies the following.

\begin{equation}\label{3.5}
    \sum\limits_{i=1}^{k}({\rm flow}_{L}(C_i,M_{1})-{\rm flow}_{L}(C_i,M_{2}))=0.
\end{equation}

Since each $C_{i}$ is non-contractible cycle,
$|{\rm flow}_{L}(C_{i},M_{1})-{\rm flow}_{L}(C_{i},M_{2})|=1$ by Lemma \ref{one II cycle},
for $1\leq i\leq k$. So Eq. (\ref{3.5}) implies that $k$ is even:
  half of the cycles  $C_{1},\ldots,C_{k}$ yield
${\rm flow}_{L}(C_{i},M_{1})-{\rm flow}_{L}(C_{i},M_{2})=1$, while
the other half  yield $-1$.
By Lemma \ref{two cycle},  half of the cycles in $\mathcal C$
are proper $M_{1}$-alternating cycles and
the other half  are improper $M_{1}$-alternating
cycles when drawing $N$ in the plane. So we can take a pair of proper and improper $M_1$-alternating cycles $C_1$ and $C_2$ in $\mathcal C$.  We use
$N'=N[C_{1},C_{2}]$ to denote sub-nanotube with
two open-ends $C_1$ and $C_2$ from $N$. Then the restriction $M'_i$ of $M_i$ on $N'$ is a perfect matching of $N'$, $i=1,2$, and $M_{1}'$, $M_{2}'$ have precisely the same ladders since $M_{1}$ and $M_{2}$ share the same ladder structure. By Lemma \ref{elementary nanotube}, $N'$ is an elementary nanotube.

Let $M_3=M_1\triangle E(C_1)\triangle E(C_2)$ and
 $M_{3}^{'}=M_{3}|_{N'}$. Then $M_{1}^{'}\triangle M_{3}^{'}=E(C_1)\cup (C_2)$.
Since $N'$ is regarded as  a plane elementary bipartite graph so that  $I[C_{2}]\subset I[C_{1}]$. By Lemma \ref{path},  $R_{6}(N')$ has a path between  $M_{1}'$ and  $M_{3}'$. So $R_{6}(N)$ has a path between  $M_{1}$ and  $M_{3}$.  From the necessary,
${\rm flow}_{L}(N,M_{3})= {\rm flow}_{L}(N,M_{1})={\rm flow}_{L}(N,M_{2})$
and $M_{1},M_{2},M_{3}$ have precisely the same ladders. Since $M_3\triangle M_2=\mathcal C\setminus \{C_1,C_2\}$,
by the induction hypothesis, $R_{6}(N)$ has a path
between $M_{3}$ and $M_{2}$. Thus $R_{6}(N)$ has a path between $M_1$ and
$M_{2}$.
\hfill $\square$
\section{Proof of Theorem \ref{main result}}

In algebraic topology, a loop of a torus $T$ is a continuous map from closed interval $[0,1]$ to $T$ with a starting and ending at the same point. The fundamental group  of a topological space is the set of homotopy classes of loops based at a given point, equipped with the operation of path composition.
The fundamental group of a torus $T$ is isomorphic to $\mathbb{Z}\times \mathbb{Z}$ \cite{Hatc01}.
Under this isomorphism, a pair $(m,n) \in \mathbb{Z} \times \mathbb{Z}$ corresponds to a loop of $T$, denoted by $C_{m,n}$, which winds $m$ times in the  longitude (through the hole, i.e. along $x$-axis)  while winding $n$ times in the latitude around the torus $T$ (along $y$-axis). Note that $(m,n)$ is the \emph{winding vector} of $C_{m,n}$ \cite{Gree09}.
One could also allow negative values for $m$ or $n$, corresponding to traversing the loop in the opposite direction.  $(m,n)$ and $(-m,-n)$ may actually represent a pair of loops of opposite orientations.

An oriented cycle  of  a toroidal polyhex $H$ is a simple loop. A cycle  $C$ of $H$ is said to have a winding vector $(m,n)$ if $(m,n)$ is the winding vector of some orientation of $C$.   A $C_{m,n}$-cycle is an oriented cycle with the winding vector $(m,n)$.

Two loops of a torus are said to be {\it freely homotopic} if one can be continuously deformed into the other. A loop  freely homotopic to $C_{0,0}$ is a contractible loop.
Every non-contractible loop in the torus is freely homotopic to $C_{m,n}$ for some non-zero $(m, n) \in \mathbb{Z} \times \mathbb{Z}$.
It is well known that $C_{m,n}$ is represented by a simple loop if and only if  ${\rm gcd}(m,n)=1$ (the greatest common divisor of $m$ and $n$) (see \cite[Theorem 1]{Scha76} ).

\begin{defi}\label{geometric intersection}
{\rm \cite{Schr93} The geometric intersection number of two homotopy classes of loops $C_{m,n}$ and $C_{m',n'}$ is $\operatorname{mincr}(C_{m,n}, C_{m',n'}) = \min\{|C\cap D| \}$ where $C$ and $D$ range over all loops freely homotopic to $C_{m,n}$ and $C_{m',n'}$, respectively.}
\end{defi}
 Moreover,
\begin{equation}\label{number of crossing}
\operatorname{mincr}(C_{m,n}, C_{m',n'}) = |mn' - m'n|,
\end{equation}
for all $m, n, m', n' \in \mathbb{Z}$ \cite{Schr93}.

\begin{thm}\label{two types}
Let $\mathcal C$ be a set of disjoint non-contractible cycles in a toroidal polyhex $H$ with $|\mathcal C|\geq2$.  Then   all cycles of  $\mathcal C$  can have the same winding vector.
\end{thm}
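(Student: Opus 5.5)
We interpret the statement as follows: the cycles of $\mathcal C$ can be oriented so that they all have the same winding vector $(m,n)$, with $\gcd(m,n)=1$. The plan uses the results recalled just above. Each cycle $C\in\mathcal C$ is a simple closed curve on the torus $T$ underlying $H$. Since $C$ is non-contractible, it is freely homotopic to some $C_{m,n}$ with $(m,n)\neq(0,0)$. A simple non-contractible loop represents a primitive class, so $\gcd(m,n)=1$ by \cite[Theorem 1]{Scha76}.

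\textbf{Step 1: all cycles lie in the same class up to sign.} Take two distinct cycles $C,D\in\mathcal C$, with winding vectors $(m,n)$ and $(m',n')$. They are vertex-disjoint subgraphs of $H$, hence disjoint as curves on $T$, so $|C\cap D|=0$. By Definition \ref{geometric intersection}, $\operatorname{mincr}(C_{m,n},C_{m',n'})\le |C\cap D|=0$. By (\ref{number of crossing}) this gives $mn'-m'n=0$, so $(m,n)$ and $(m',n')$ are parallel vectors in $\mathbb{Z}^2$. Both are primitive, i.e.\ $\gcd(m,n)=\gcd(m',n')=1$. Parallel primitive vectors must satisfy $(m',n')=\pm(m,n)$; a short divisibility argument gives this, since $m'n=mn'$ with $\gcd(m,n)=1$ forces $m\mid m'$ and $n\mid n'$, and symmetrically.

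\textbf{Step 2: fix the orientations.} Choose one cycle $C_0\in\mathcal C$ and an orientation of it with winding vector $(m,n)$. For each other $C\in\mathcal C$, Step 1 says that one of its two orientations has winding vector $(m,n)$. Reversing orientation negates the winding vector, so we choose that orientation. After this choice every cycle of $\mathcal C$ has winding vector $(m,n)$, which proves the statement.

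\textbf{Main obstacle.} The delicate point is justifying that the homotopy-theoretic facts apply to these graph cycles. We need that an embedded cycle of $H$, as a closed curve on $T$, has a well-defined winding vector, obtained by reading $H(p,q,t)$ as a quotient of the plane through the parallelogram $P$ with the twist $t$. We also need that vertex-disjoint cycles give disjoint curves, which holds because $H$ is embedded in $T$. The twist $t$ only changes which lattice identifies the torus, not the isomorphism $\pi_1(T)\cong\mathbb{Z}\times\mathbb{Z}$, so the intersection formula (\ref{number of crossing}) applies with coordinates taken with respect to the basis given by the longitude and latitude loops. With that, the argument reduces to the two algebraic steps above.
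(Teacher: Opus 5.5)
Your proof is correct and takes essentially the same route as the paper: for disjoint cycles the geometric intersection number $|mn'-m'n|$ must vanish, and together with primitivity ($\gcd(m,n)=\gcd(m',n')=1$ for simple loops) this forces $(m',n')=\pm(m,n)$. The only differences are that you skip the paper's preliminary split into $C_{0,n}$-, $C_{m,0}$- and $C_{m,n}$-cycles ($mn\neq 0$), using a single divisibility argument for all cases, and that you state the final choice of orientations explicitly.
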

\begin{proof}
We can classify the non-contractible cycles of $H$ into the following three types:
 $C_{0,n}$-cycles ($n\neq0$),  $C_{m,0}$-cycles ($m\neq0$), and   $C_{m,n}$-cycles ($mn\neq 0$). We claim that all cycles of $\mathcal C$ have the same  type. Otherwise, suppose that two cycles $C_{1}$ and $C_{2}$ of $\mathcal C$ are of different types.
If $C_{1}$ is
a $C_{0,n}$-cycle and $C_{2}$ is a $C_{m,0}$-cycle, by   Eq. (\ref{number of crossing}) we have $|C_{1}\cap C_{2}|\geq \operatorname{mincr}(C_{0,n}, C_{m,0}) = |0\times 0 - m\times n|=|mn|\geq 1$, which implies that $C_{1}$ and $C_{2}$ have a vertex in common, a contradiction. The proofs for the other cases  are similar.

We now show that all cycles of $\mathcal C$ can have the same winding vector.
If the cycles of $\mathcal C$ are $C_{0,n}$-cycles ($n\neq0$), then $|n|={\rm gcd}(0,n)=1$ since they are simple loops.
If the cycles of $\mathcal C$ are  $C_{m,0}$-cycles ($m\neq0$), similarly we have $|m|=1$.  Next,  assume that all cycles of $\mathcal C$ are $C_{m,n}$-cycles ($mn\neq 0$). Let two cycles  $C_{1}$ and $C_{2}$  of $\mathcal C$ be  $C_{m,n}$-cycle and $C_{m',n'}$-cycle respectively.
By Eq. \ref{number of crossing}, $\operatorname{mincr}(C_{m,n}, C_{m',n'})=|mn' - m'n|\leq |C_{1}\cap C_{2}|$.
Since $C_{1}$ and $C_{2}$ are disjoint cycles, we have $|mn' - m'n|=0$. So  $mn'=m'n$. Since $mn\neq0$ and $m'n'\neq0$, let $t=\frac{m}{m'}=\frac{n}{n'}$. Then $m = tm'$, $n = tn'$.
We claim that $|t|=1$. Let $t=\frac{a}{b}$, where $a\neq 0$ and $b> 0$ are a pair of coprime integers. So  $mb=am'$ and $nb=an'$, which imply that $b\mid m'$ and $b\mid n'$
as ${\gcd}(a,b)=1$.   As ${\rm gcd}(m',n')=1$,  $b=1$. Thus, $t$ is an integer.
Since $1={\rm gcd}(m,n)={\rm gcd}(tm',tn')=|t|{\rm gcd}(m',n')=|t|\times 1=1$, $|t|=1$, and $(m,n)=\pm (m',n')$. 
\end{proof}
\begin{lem}[\cite{Levi63}]\label{cylinder}
Let $C_{1}$ and $C_{2}$ be disjoint simple loops on a closed surface $V$.  If $C_{1}$ and $C_{2}$ are freely homotopic but not homotopic to zero, then $C_{1}$ and $C_{2}$ bound a cylinder in $V$.
\end{lem}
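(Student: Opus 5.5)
Since this lemma is only used later for the torus underlying a toroidal polyhex, I would first give a self-contained argument for $V=T$ a torus, and then indicate how the general closed-surface case goes.

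\textbf{Torus case.} First I show that $C_1$ is non-separating. If $C_1$ separated $T$, it would be null-homologous in $H_1(T;\mathbb Z)\cong\mathbb Z\times\mathbb Z$. The fundamental group is abelian, so free homotopy classes and homology classes coincide. Hence $C_1$ would be homotopic to zero, a contradiction.

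Cutting $T$ along the non-separating simple loop $C_1$ therefore yields a connected compact surface $A$ with two boundary circles $C_1^{+},C_1^{-}$ and Euler characteristic $\chi(T)=0$. By the classification of surfaces, $A$ is a cylinder (annulus). Since $C_2$ is disjoint from $C_1$, it is a simple closed curve in the interior of $A$.

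Next I use the Jordan--Schoenflies theorem in the annulus: a simple closed curve in the interior of an annulus either bounds a disc in $A$, or separates the two boundary components. In the second case it cuts $A$ into two annuli.

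The first alternative is excluded, because a disc in $A$ maps to a disc in $T$, which would make $C_2$ homotopic to zero. So $C_2$ separates $C_1^{+}$ from $C_1^{-}$. The component of $A\setminus C_2$ adjacent to $C_1^{+}$ is an annulus, and it embeds in $T$ with boundary $C_1\cup C_2$. This is the required cylinder.

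\textbf{General closed surface.} The cut-and-classify step breaks down here, because a non-separating curve on a higher-genus surface does not cut it into an annulus. Instead I would pass to the covering space $p:\widetilde V\to V$ corresponding to the infinite cyclic subgroup $\langle [C_1]\rangle\le\pi_1(V)$. This subgroup is infinite cyclic because $\pi_1$ of a closed surface other than $S^2$ and $\mathbb{RP}^2$ is torsion-free; those two surfaces are excluded by the hypothesis, since they have no non-contractible loops of infinite order. The cover $\widetilde V$ is then an open annulus (or an open M\"obius band, a case handled similarly). The loop $C_1$ lifts to a simple closed core curve $\widetilde C_1$.

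Because $C_2$ is freely homotopic to $C_1$, some conjugate lift $\widetilde C_2$ is a closed simple curve in $\widetilde V$ homotopic to $\widetilde C_1$. It is disjoint from $\widetilde C_1$ because $C_1\cap C_2=\emptyset$. Applying Schoenflies in $\widetilde V$ as above, $\widetilde C_1\cup\widetilde C_2$ bounds a compact annulus $\widetilde A\subset \widetilde V$.

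\textbf{Main obstacle.} The hardest step is showing that $p|_{\widetilde A}$ is injective, so that $p(\widetilde A)$ is an embedded cylinder in $V$. My plan is as follows.

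1. Note that $p(\partial\widetilde A)=C_1\cup C_2$ is embedded, so any failure of injectivity produces a deck-translate $g\widetilde A$ meeting $\widetilde A$.
2. Since deck translates of $\widetilde C_1$ and $\widetilde C_2$ are disjoint from each other (again because $C_1\cap C_2=\emptyset$), the boundaries of $g\widetilde A$ and $\widetilde A$ do not cross.
3. Hence one of $g\widetilde A$, $\widetilde A$ would have to be nested inside the other.
4. Nesting is impossible by an area or Euler-characteristic count, or by compactness together with the infinitely many distinct translates.

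This is Epstein's argument. Since the torus case is all that is needed in this paper, the self-contained cutting proof above is what I would actually write out in full.
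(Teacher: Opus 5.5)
\textbf{Torus case.} The paper gives no proof of this lemma. It only cites Levine for arbitrary closed surfaces, and it uses the lemma only on the torus, in the proof of Theorem \ref{main result}. Your torus argument is a correct, self-contained replacement for that use:
\begin{itemize}
\item $C_1$ is non-separating, because $\pi_1(T)\cong H_1(T;\mathbb{Z})$ means a separating (hence null-homologous) simple closed curve would be null-homotopic.
\item Cutting along the two-sided curve $C_1$ gives a connected orientable surface with two boundary circles and $\chi=0$, hence an annulus $A$.
\item Schoenflies in $A$, with the disc alternative excluded because $C_2$ is not null-homotopic, gives an annulus between $C_1^{+}$ and $C_2$. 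It embeds in $T$ because it contains only one copy of $C_1$.
\end{itemize}
You never use that $C_1$ and $C_2$ are freely homotopic, only that both are non-contractible. So on the torus you actually prove that any two disjoint non-contractible simple closed curves cobound a cylinder. In particular this also recovers Theorem \ref{two types} without the intersection-number formula (\ref{number of crossing}). Compared with your route, the citation buys only generality that the paper never needs.

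\textbf{General closed surface.} As a proof of the lemma as stated, your sketch has a genuine gap exactly at the step you flag. The cover $p\colon\widetilde V\to V$ associated with $\langle[C_1]\rangle$ is not regular when $\chi(V)<0$. For instance, for a closed orientable surface of genus at least $2$, the normalizer of $\langle[C_1]\rangle$ is the subgroup itself, so the deck group is trivial. Two points of $\widetilde A$ with the same image are then not related by any deck transformation, so the ``deck-translate $g\widetilde A$'' in step~1 does not exist. Step~4 is not an argument either.

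The standard repair, say for orientable $V$ with $\chi(V)<0$, works in the universal cover:
\begin{itemize}
\item Lift $C_1,C_2$ to disjoint lines $\widehat C_1,\widehat C_2$ invariant under the same primitive deck transformation $g$; this is where free homotopy enters. Let $S$ be the strip between them.
\item A translate $h\widehat C_i$ lying in the interior of $S$ would have the same two endpoints at infinity. This forces $hgh^{-1}=g^{\pm1}$, hence $h\in\langle g\rangle$ and $h\widehat C_i=\widehat C_i\subset\partial S$, a contradiction.
\item Since boundary lines of distinct translates never cross, $hS\cap S\neq\emptyset$ implies $hS=S$, and $S/\langle g\rangle$ embeds in $V$ as the cylinder.
\end{itemize}

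Also, $\mathbb{RP}^2$ is not excluded by ``not homotopic to zero'', since its essential loops have order $2$. It, and the M\"obius-band alternative for $\widetilde V$, are ruled out for a different reason. Freely homotopic one-sided curves have mod-$2$ intersection number $1$, so disjoint $C_1,C_2$ must be two-sided. But every essential simple closed curve in $\mathbb{RP}^2$ is one-sided.
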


We can compute the winding vector of an oriented cycle $C$ in a toroidal polyhex $H(p,q,t)$  (see \cite{EL21}).  We re-choose the $y$-axis  to obtain a new parallelogram $P'$ so that the corresponding points on the top and bottom sides are identified  (see Fig. \ref{Figure2-8}). In this way we also obtain $H(p,q,t)$.  For $P'$, crossing the vertical side once rightward (resp. leftward)  contributes $1$ (resp. $-1$) to the first  coordinate $m$ of the winding vector, and crossing the horizontal side one upward (resp. downward)  contributes $1$ (resp. $-1$) to the second coordinate $n$.  

From a geometric perspective, a non-contractible $C_{m,n}$-cycle $C$ for $n\not= 0$ can be decomposed  into segments whose endpoints lie on the top and bottom sides of $P$. These segments are of two types: those with both endpoints on the same side of $P$, and those with endpoints on different sides of $P$.  The  latter are
{\it  columns} of $C$. Since there may exist two columns of $C$ upward and downward when going along $C$ on $P$; for an example,  see columns $I_{3}$ from 1 to 6 and $I_{4}$  from 7 to 3 in Fig. \ref{Figure2-8}.
\begin{figure}\centering\includegraphics[scale=0.7]{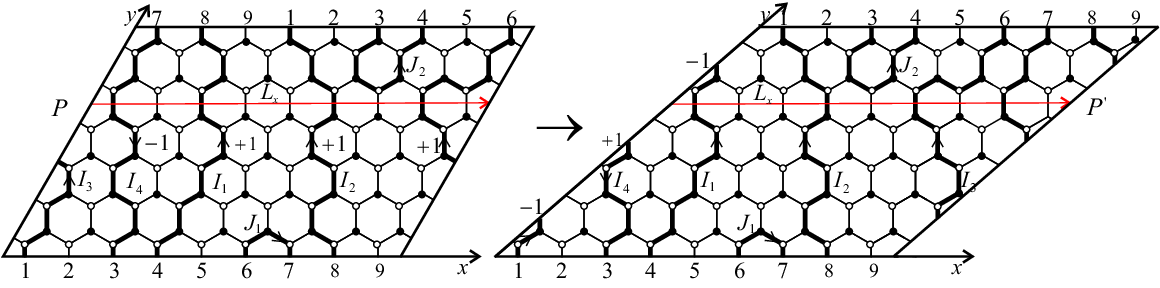}\caption{\label{Figure2-8} Two constructions of $H(9,6,3)$ with  a $(-1,2)$-cycle $C$ (thick line).}
\end{figure}
Thus, cycle $C$ has at least $n$ columns. Such segments of $C$ are always oriented along $C$.

Let $L_{x}$ and $L_{y}$ be any chosen two cut cycles of a toroidal polyhex $H$.  To avoid $L_x$ crossing the same edge of two segments of a cycle $C$, we choose $L_x$ so that it contains no points on the $x$-axis. 
For example, see Fig. \ref{Figure2-8}.

The following lemma gives the flow variation  of  perfect matchings $M_{1}$, $M_{2}$ across a  cut cycle on an $(M_{1},M_{2})$-alternating cycle.

\begin{lem}\label{one cycle}
Let $M_{1}$ and $M_{2}$ be two perfect matchings of a toroidal polyhex $H$ and $C$ an $(M_{1},M_{2})$-alternating cycle.

{\rm(i)}  If $C$ has the winding vector $\pm(1,0)$, then
\begin{equation}\label{C_{2}}
|{\rm flow}_{L_{y}}(C,M_{1})-{\rm flow}_{L_{y}}(C,M_{2})|=1.
\end{equation}

{\rm(ii)} If $C$ has  a winding vector $(m,n)$ for  $n\geq1$, then
\begin{equation}\label{C_{1}}
|{\rm flow}_{L_{x}}(C,M_{1})-{\rm flow}_{L_{x}}(C,M_{2})|=n.
\end{equation}
\end{lem}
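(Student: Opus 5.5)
We reduce the flow difference to a signed count of crossings between $C$ and the cut cycle. Since $C$ is $(M_1,M_2)$-alternating, every edge of $C$ lies in exactly one of $M_1,M_2$. Hence
\[
{\rm flow}_{L}(C,M_1)-{\rm flow}_{L}(C,M_2)=\sum_{e\in L^*\cap E(C)}\varepsilon(e)\sigma(e),
\]
where $\sigma(e)=\pm1$ according as $e\in(L^*)^{\pm}$, and $\varepsilon(e)=+1$ if $e\in M_1$ and $-1$ if $e\in M_2$.

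Now orient $C$ so that its $M_1$-edges all go from white to black. This is possible because the $M_1$-edges and $M_2$-edges alternate along $C$ and $C$ is bipartite. With this orientation every $M_2$-edge goes from black to white. An edge $e$ of $C$ crossed by $L$ has its white end on the left of $L$ exactly when, traversed along $C$, it crosses $L$ from left to right (for $e\in M_1$) or from right to left (for $e\in M_2$). A short check of the four cases gives $\varepsilon(e)\sigma(e)=+1$ when $C$ crosses $L$ from left to right at $e$, and $-1$ otherwise, uniformly for both matchings.

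So the difference equals the algebraic intersection number $\iota(C,L)$ of the oriented loop $C$ with the oriented cut cycle $L$. Here we use that $L$ meets the graph only transversally, at midpoints of edges, so each crossing is a genuine transverse crossing of one edge.

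The algebraic intersection number is a homotopy invariant. With $L_x$ homotopic to $C_{1,0}$ and $L_y$ homotopic to $C_{0,1}$, it equals the determinant of the winding vectors:
\[
\iota(C_{m,n},C_{1,0})=\pm n,\qquad \iota(C_{m,n},C_{0,1})=\pm m.
\]
This is the signed analogue of Eq.~(\ref{number of crossing}). Case (i), with $(m,n)=\pm(1,0)$, then gives $|\iota(C,L_y)|=1$, and case (ii) gives $|\iota(C,L_x)|=n$.

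For a more combinatorial proof in the spirit of the paper, case (ii) can be done with the column decomposition. Cut the torus along the $x$-axis, which $L_x$ avoids by choice. Each segment of $C$ with both endpoints on the same side of $P$ crosses $L_x$ an even number of times with alternating directions, so it contributes $0$. This uses the fact that $L_x$ separates the parallelogram strip into top and bottom parts. Each column crosses $L_x$ with net $\pm1$, positive if upward and negative if downward. The numbers of upward and downward columns differ by exactly $n$, by the way the second coordinate of the winding vector was computed. Case (i) follows in the same way with the roles of the axes exchanged, using the reparametrized parallelogram $P'$.

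The main obstacle is the sign bookkeeping: verifying carefully that orienting $C$ by ``$M_1$-edges white$\to$black'' makes every crossing contribute $+1$ or $-1$ purely according to its crossing direction. A second, minor point is justifying that the same-side segments contribute zero, which is a Jordan-curve argument in the strip bounded by the top and bottom sides.
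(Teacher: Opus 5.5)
Your proof is correct, but its main argument differs from the paper's. The paper works entirely inside the parallelogram. In (i) it lists the crossing edges $e_1,\dots,e_{2r+1}$ of $C$ with $L_y$ in order along $C$ and pairs them as $(e_1,e_2),(e_3,e_4),\dots$. A case split on the sides of the white ends shows that each pair contributes equally to both flows, so the difference is read off the last edge. In (ii) it splits $C$ into columns and same-side segments, cancels the same-side segments and oppositely directed columns by the same pairing (Claims 1 and 2), and checks that the $n$ co-directed columns all contribute with one sign.

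Your choice of orienting $C$ so that all $M_1$-edges run from white to black replaces this case analysis with one local rule: each crossing contributes $\pm1$ according only to its crossing direction. So the difference is the algebraic intersection number $\iota(C,L)$, and (i), (ii) and also Lemma~\ref{one contractible cycle} (class $(0,0)$) all follow from the determinant formula. The cost is that you rely on the homotopy invariance of the signed intersection number, which the paper's proof of this lemma avoids.

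The gain is robustness. The paper's pairing tacitly needs consecutive crossings to go in opposite directions. That holds in (ii), inside the strip cut along the $x$-axis where $L_x$ separates top from bottom; your combinatorial version makes exactly this point explicit. It is not automatic for $L_y$ on the uncut torus in (i), where a simple closed curve of class $\pm(1,0)$ can cross $L_y$ twice in succession in the same direction.

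Two small caveats. First, because of the twist $t$, $L_y$ is in general of class $\pm(k,1)$ rather than exactly $C_{0,1}$. This is harmless, since the determinant against $\pm(1,0)$ is still $\pm1$. Second, your ``axes exchanged'' combinatorial sketch for (i) is not self-contained. It needs $L_y$ to lie in the annulus obtained by cutting along the lateral side of $P'$, i.e.\ to avoid that side, which forces $k=0$. So (i) should rest on the intersection-number argument.
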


\begin{proof}
(i) Orient $C$ to obtain a $C_{1,0}$-cycle (resp. $C_{-1,0}$-cycle). Then  $C$ traverses $p\times q$-parallelogram $P$ once from the left (resp. right) side to the right (resp. left) side. Then  $|L_{y}^*\cap E(C)|$ is odd. Let $L_{y}^*\cap E(C)=\{e_{1},e_{2},\ldots,e_{2r},e_{2r+1}\}$, where are sequentially labeled along the orientation of $C$.
For  a pair of successive edges $e_{i},e_{i+1}$, if the white end-vertices of $e_{i},e_{i+1}$ are on the same side of $L_{y}$, say   the left of $L_{y}$, then $e_{i},e_{i+1}\in (L^{*})^{+}$ and $(L^{*})^{-}\cap \{e_{i},e_{i+1}\}=\emptyset$. Since a path in $C$ between the same colored vertices  is of even length, one of $e_{i}$ and $e_{i+1}$ is in $M_{1}$, and the other one must be in $M_{2}$. Then $|(L_{y}^{*})^{+} \cap M_{1}\cap \{e_{i},e_{i+1}\}|-|(L_{y}^{*})^{-} \cap M_{1}\cap \{e_{i},e_{i+1}\}|=|(L_{y}^{*})^{+} \cap M_{2}\cap \{e_{i},e_{i+1}\}|-|(L_{y}^{*})^{-} \cap M_{2}\cap \{e_{i},e_{i+1}\}|=1-0=1$. If the white end-vertices of $e_{i},e_{i+1}$ are on different sides of $L_{y}$, say the white end-vertex of $e_{i}$ (resp. $e_{i+1}$) is on the left (resp. right) of $L_{y}$, then $e_{i}\in (L_{y}^{*})^{+}$ and $e_{i+1}\in (L_{y}^{*})^{-}$. In this case  $\{e_i,e_{i+1}\}\subseteq M_1$  or  $M_2$.  So  $|(L_{y}^{*})^{+} \cap M_{j}\cap \{e_{i},e_{i+1}\}|-|(L_{y}^{*})^{-} \cap M_{j}\cap \{e_{i},e_{i+1}\}|=0$, $j=1,2$.
In short, $M_{1}$ and $M_{2}$ have the same  flow across $L_{y}$ on the first $r$ pairs of edges.
Thus, ${\rm flow}_{L_{y}}(C,M_{1})-{\rm flow}_{L_{y}}(C,M_{2})$
is determined by the last edge $e_{2r+1}$. Since $e_{2r+1}$ belongs to either $M_1$ or $M_2$,  $|{\rm flow}_{L_{y}}(C,M_{1})-{\rm flow}_{L_{y}}(C,M_{2})|=1$.

(ii) Orient $C$ to obtain a $C_{m,n}$-cycle ($n\geq 1$). Then there exists $n+2s$ ($s\geq 0$) columns $I_{1},\ldots, I_{n+2s}$ and $t$ ($t\geq 0$) segments $J_{1},\ldots, J_{t}$
such that $$C=I_{1}\cup \cdots \cup I_{n}\cup I_{n+1}\cup \cdots \cup I_{n+2s}\cup J_{1}\cup \cdots \cup J_{t},$$ where the first $n$ columns have the same direction on  $P$;  half of the remaining columns are from bottom to top, and the other half from top to bottom on the  $P$  and  the endpoint of $J_{w}$ ($1\leq w\leq t$) belong to same side on $P$.  Along the orientation of $C$, if an edge $e$ has a  half in a segment $A_{i}$ and next half in a segment $A_{i+1}$, then  $A_{i}$ and $A_{i+1}$ are successive segments  and set $e$ belong to $ A_{i}$.  For  $j=1,2$, we have  \begin{equation}\label{4.5}
{\rm flow}_{L_{x}}(C,M_{j})=
\sum\limits_{i=1}^{n} {\rm flow}_{L_{x}}(I_{i},M_{j})+\sum\limits_{i=n+1}^{n+2s} {\rm flow}_{L_{x}}(I_{i},M_{j})+\sum\limits_{w=1}^{t} {\rm flow}_{L_{x}}(J_{w},M_{j}).
\end{equation}

\noindent {\bf{ Claim 1.}} If  the segment $J_{1}$ has  two endpoints in the same side on $P$, then \begin{equation}\label{J_{1}}
{\rm flow}_{L_{x}}(J_{1},M_{1})-{\rm flow}_{L_{x}}(J_{1},M_{2})=0.
\end{equation}
\begin{proof}
Since two endpoints of $J_{1}$  are in the same side on $P$, both are in the same side on $L_x$, so $|L_{x}^*\cap E(J_{1})|$ is even. By the same argument as in (i), $M_{1}$ and $M_{2}$ have the same  flows across $L_{x}$ on a  pair
of successive edges in $L_{x}^*\cap E(J_{1})$. Thus, Eq. (\ref{J_{1}}) holds. \end{proof}
\noindent {\bf{ Claim 2.}}
 If  two columns $I_{1}$ and $I_{2}$   have opposite orientations  on $P$, then \begin{equation}\label{C_{3}}
{\rm flow}_{L_{x}}(I_{1}\cup I_{2},M_{1})-{\rm flow}_{L_{x}}(I_{1}\cup I_{2},M_{2})=0.
\end{equation}
\begin{proof}
Without loss of generality, suppose that $I_{1}$  is along  $y$-axis. Then $I_2$ is in the direction opposite to the $y$-axis.  Since the endpoints of $I_{j}$ lie on the two sides of $L_{x}$, $j=1,2$, $|L_{x}^*\cap E(I_{j})|$  is odd. Similar to (i), we know that ${\rm flow}_{L_{x}}(I_{j},M_{1})-{\rm flow}_{L_{x}}(I_{j},M_{2})$
is determined by the last edge in $L_{x}^*\cap E(I_{j})$, say $e_{j}$.
 If the white end-vertices of $e_{1},e_{2}$ are on the same side of $L_{x}$, say left side, then $e_{1}$ is from black end-vertex to white end-vertex and $e_{2}$ is from white end-vertex to black end-vertex along $C$. Since $C$ is an $(M_1,M_2)$-alternating cycle,  one of $e_{1}$ and $e_{2}$ is in $M_{1}$  and the other one is in $M_{2}$.
Without loss of generality, suppose $e_{1}\in M_{1}$ and $e_{2}\in M_{2}$. Then
${\rm flow}_{L_{x}}(I_{1},M_{1})-{\rm flow}_{L_{x}}(I_{1},M_{2})=1$
and ${\rm flow}_{L_{x}}(I_{2},M_{1})-{\rm flow}_{L_{x}}(I_{2},M_{2})=-1$. Summing the above equalities to get Eq. (\ref{C_{3}}).
If the white end-vertices of $e_{1},e_{2}$ are on the different sides of $L_{x}$, without loss of generality  assume that the white end-vertex of $e_{1}$ (resp. $e_{2}$) is on the left (resp. right) side of $L_{x}$. Then  $\{e_1, e_{2}\}\subseteq M_1$  or  $M_2$. For the former,   ${\rm flow}_{L_{x}}(I_{1}\cup I_2,M_{1})-{\rm flow}_{L_{x}}(I_{1}\cup I_2 ,M_{2})=1-1=0$.
For the latter,  Eq. (\ref{C_{3}}) holds similarly. 
\end{proof}

By Claims 1 and  2 and Eq. (\ref{4.5}), we have
\begin{equation}\label{redu}
|{\rm flow}_{L_{x}}(C,M_{1})-
{\rm flow}_{L_{x}}(C,M_{2})|=
|\sum\limits_{i=1}^{n}({\rm flow}_{L_{x}}(I_{i},M_{1})-
{\rm flow}_{L_{x}}(I_{i},M_{2}))|.\end{equation}
Since two endpoints of each $I_{i}$ lie on the different sides on $L_{x}$, $|L_{x}^*\cap E(I_{i})|$ is odd.
Similar to (i), we know that
${\rm flow}_{L_{x}}(I_{i},M_{1})-{\rm flow}_{L_{x}}(I_{i},M_{2})$
is determined by the last edge  in $L_{x}^*\cap E(I_{i})$, say $e_i$. Since $I_{1},\ldots,I_{n}$ have the same direction on $P$, all the $e_i$ along $C$, $0\leq i\leq n$, cross $L_x$ from the same side.
Without loss of generality,
suppose that the white end-vertices of $e_{1},e_{2},\ldots,e_{n_{1}}$ $(0\leq n_{1}\leq n$)
are on the left of $L_{x}$ and the white end-vertices of
$e_{n_1+1},\ldots, e_{n}$ are on the right of $L_{x}$ (see Fig. \ref{Figure2-9}).  
Let $E_{1}=\{e_{1},e_{2},\ldots,e_{n_1}\}$ and $E_2=\{e_{n_1+1},\ldots,e_{n}\}$.
Since $C$ is an $(M_{1},M_{2})$-alternating cycle,
we have  $E_1\subseteq M_{1}$ or $M_{2}$.

\begin{figure}[htbp]
\centering
\includegraphics[scale=0.8]{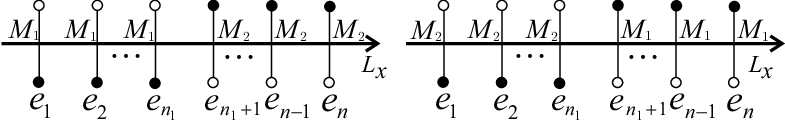}
\caption{\label{Figure2-9} Illustration for two cases of the proof of Lemma \ref{one cycle}(ii).}
\end{figure}
If $E_1\subseteq M_{1}$,
then $E_2\subseteq M_{2}$.
Thus ${\rm flow}_{L_{x}}(I_{i},M_{1})-
{\rm flow}_{L_{x}}(I_{i},M_{2})=1$ $(1\leq i\leq n_{1})$
and ${\rm flow}_{L_{x}}(I_{i},M_{1})-
{\rm flow}_{L_{x}}(I_{i},M_{2})=0-(-1)=1$ $(n_{1}+1\leq i\leq n)$. So $$\sum\limits_{i=1}^{n}({\rm flow}_{L_{x}}(I_{i},M_{1})-
{\rm flow}_{L_{x}}(I_{i},M_{2}))=n.$$

\noindent If $E_1\subseteq M_{2}$,
then $E_2\subseteq M_{1}$. Similarly we have
$$\sum\limits_{i=1}^{n}({\rm flow}_{L_{x}}(I_{i},M_{1})-
{\rm flow}_{L_{x}}(I_{i},M_{2}))=-n.$$
In short, Eq. (\ref{C_{1}}) holds from Eq. (\ref{redu}) and the above two equalities.
\end{proof}

\begin{figure}[htbp]
\centering
\includegraphics{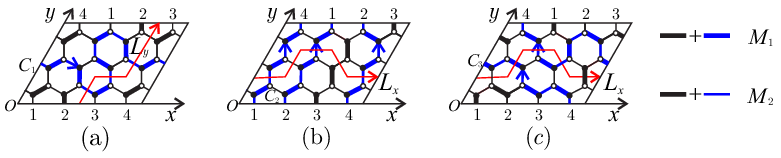}
\caption{\label{Figure2-10} Three $(M_1,M_2)$-alternating cycles (blue) of toroidal polyhex $H(4,3,1)$.}
\end{figure}
\begin{ex}
{\rm For perfect matchings  $M_{1}$, $M_{2}$ of   toroidal polyhex $H(4,3,1)$ as shown  in Fig. \ref{Figure2-10}, $(M_1,M_2)$-alternating cycle $C_1$  in Fig. \ref{Figure2-10}(a) has a winding vector (1,0) and  ${\rm flow}_{L_{y}}(C_{1},M_{1})-{\rm flow}_{L_{y}}(C_{1},M_{2})=-1$; the  cycle $C_{2}$ in Fig. \ref{Figure2-10}(b) has a winding vector $(-1,3)$ and
 ${\rm flow}_{L_{x}}(C_{2},M_{1})-{\rm flow}_{L_{x}}(C_{2},M_{2})=3$;
the  cycle $C_{3}$ in Fig. \ref{Figure2-10}(c) has a winding vector $(-1,1)$
and ${\rm flow}_{L_{x}}(C_{3},M_{1})-{\rm flow}_{L_{x}}(C_{3},M_{2})=1$.}
\end{ex}

An $M$-alternating non-contractible cycle $C$ with an orientation in $H$  is said to be \emph{proper (improper)} if every edge of $C$ belonging to $M$ goes from the white (black) end-vertex to the black (white) end-vertex along the orientation of $C$.

\begin{lem}\label{two non-contractible cycle}
Let $M_{1}$, $M_{2}$ be any two perfect matchings of a toroidal polyhex $H$ and $C_{1}$, $C_{2}$ two disjoint  $(M_{1}, M_{2})$-alternating cycles.  Then one of $C_{1}$ and $C_{2}$ is a proper $M_{1}$-alternating cycle and the other one is improper if
(i) both $C_{1}$ and $C_{2}$ have the same winding vector $(1,0)$  and
 ${\rm flow}_{L_{y}}(C_{1}\cup C_{2},M_{1})={\rm flow}_{L_{y}}(C_{1}\cup C_{2},M_{2})$, or (ii)  both $C_{1}$ and $C_{2}$ have the same winding vector $(m,n)$, $n\geq1$
and ${\rm flow}_{L_{x}}(C_{1}\cup C_{2},M_{1})={\rm flow}_{L_{x}}(C_{1}\cup C_{2},M_{2})$.

\end{lem}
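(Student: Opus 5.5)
The proof will rest on the sign computation inside Lemma \ref{one cycle}: I will show that the sign of ${\rm flow}_{L}(C,M_{1})-{\rm flow}_{L}(C,M_{2})$ for a single non-contractible $(M_1,M_2)$-alternating cycle $C$ is decided entirely by whether $C$ is proper or improper $M_1$-alternating with respect to its orientation. Then both cases follow by additivity. Throughout, orient $C_1$ and $C_2$ by the common winding vector: $(1,0)$ in case (i), $(m,n)$ with $n\ge 1$ in case (ii). Since $C_1$ and $C_2$ are disjoint,
\[
{\rm flow}_{L}(C_{1}\cup C_{2},M_{j})={\rm flow}_{L}(C_{1},M_{j})+{\rm flow}_{L}(C_{2},M_{j}).
\]

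\textbf{Case (ii).} Take $C$ with winding vector $(m,n)$, $n\geq 1$, and look at the proof of Lemma \ref{one cycle}(ii). After Claims 1 and 2, the difference reduces to the last crossing edges $e_1,\dots,e_n$ of the $n$ same-direction columns. The difference is $+n$ exactly when $E_1\subseteq M_1$ (equivalently $E_2\subseteq M_2$), and $-n$ otherwise.

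All these columns cross $L_x$ in the same direction, say from right to left. An edge $e_i\in E_1$ has its white end on the left, so traversing $C$ along its orientation we pass from the black end to the white end of $e_i$. For $e_i\in E_2$ the passage is from white to black. Because $C$ is $M_1$-alternating and all $M_1$-edges of $C$ are traversed in the same colour order, the case $E_1\subseteq M_1$ means every $M_1$-edge goes black$\to$white, that is, $C$ is improper. The case $E_1\subseteq M_2$ (so $E_2\subseteq M_1$) means every $M_1$-edge goes white$\to$black, that is, $C$ is proper. The sign convention here depends on which side $L_x$ crosses from; that is fixed once for both cycles.

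Hence
\[
{\rm flow}_{L_x}(C,M_1)-{\rm flow}_{L_x}(C,M_2)=\varepsilon n,
\]
where $\varepsilon=\pm1$ depends only on whether $C$ is proper or improper, with the same rule for both cycles. If $C_1$ and $C_2$ were both proper or both improper, the total difference would be $\pm 2n\neq 0$, contradicting the hypothesis. So one is proper and the other improper.

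\textbf{Case (i).} The same argument applies to Lemma \ref{one cycle}(i). There the difference equals $\pm1$ and is determined by the last crossing edge $e_{2r+1}$ of $C$ with $L_y$. Every $C_{1,0}$-cycle crosses $L_y$ net once in the same direction, so the side containing the white end of $e_{2r+1}$ is tied to whether $C$ traverses $e_{2r+1}$ black$\to$white or white$\to$black. Whether $e_{2r+1}$ lies in $M_1$ then determines properness, as above. Two cycles of the same type give $\pm 2\neq 0$.

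\textbf{Main obstacle.} The delicate point is checking that the sign rule relating "properness" to the sign of the flow difference is genuinely uniform over all cycles with the given winding vector. Three things must be verified: that the net crossing direction depends only on the winding vector, that the paired cancelling crossings really do not contribute, and that the reduction to the last edge does not depend on the starting point used to define "last". I would handle this by noting that each cancelling pair in Claims 1 and 2 contributes $0$ regardless of labeling. The surviving unpaired crossings then all cross in the common direction of the winding vector, so their contribution is intrinsic.
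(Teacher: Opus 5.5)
Your proposal is correct and is essentially the paper's proof. Both arguments use the proof of Lemma~\ref{one cycle} to reduce each cycle's flow difference to its surviving crossing edges. Both then observe that a common winding vector makes these edges cross the cut in the same direction, so that properness alone fixes the sign ($\pm1$ in case (i), $\pm n$ in case (ii)). Two cycles of the same type would therefore give $\pm 2$ or $\pm 2n\neq 0$; the paper phrases this as a contradiction after assuming both are proper.

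The uniformity worry you flag is settled most cleanly as follows. Let $\kappa=\pm1$ according as $C$ is proper or improper, and let $\tau(e)=\pm1$ record the direction in which $C$ crosses $L$ at a crossing edge $e$. Then $e$ contributes exactly $\kappa\,\tau(e)$ to ${\rm flow}_{L}(C,M_1)-{\rm flow}_{L}(C,M_2)$. Hence the difference is $\kappa$ times the algebraic intersection number of $C$ with $L$, and no pairing of crossings or choice of a ``last'' edge is needed.
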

\begin{proof}
Suppose to the contrary that $C_{1}$ and
$C_{2}$ are proper $M_{1}$-alternating cycles
(the proof for improper case is similar).

If the condition (i) holds,  then  ${\rm flow}_{L_{y}}(C_{i},M_{1})-{\rm flow}_{L_{y}}(C_{i},M_{2})=\pm 1$ by  Lemma \ref{one cycle}(i) and its proof, which is determined by the last edge $e_{i}$ in $L_{y}^*\cap E(C_{i})$, $i=1,2$.
If the white end-vertices of $e_{1}$, $e_{2}$ are on the same side of $L_{y}$, say the left side, then $\{e_{1},e_{2}\}\subseteq M_{1}$ since $C_{1}$ and $C_{2}$ have the same winding vector (direction) and are proper $M_{1}$-alternating cycles. Then ${\rm flow}_{L_{y}}(C_{i},M_{1})-{\rm flow}_{L_{y}}(C_{i},M_{2})=1$, $i=1,2$,
so ${\rm flow}_{L_{y}}(C_{1}\cup C_{2},M_{1})-{\rm flow}_{L_{y}}(C_{1}\cup C_{2},M_{2})=2$, a contradiction.
If the white end-vertices of $e_{1}$, $e_{2}$ are on the different sides of $L_{y}$,  without loss of generality  assume that the white end-vertex of $e_{1}$ (resp. $e_{2}$) is on the left (resp. right) side of $L_{y}$. Then 
$e_{1}\in M_{1}$, $e_{2}\in M_{2}$.  Hence ${\rm flow}_{L_{y}}(C_{i},M_{1})-{\rm flow}_{L_{y}}(C_{i},M_{2})=1$, $i=1,2$, and ${\rm flow}_{L_{y}}(C_{1}\cup C_{2},M_{1})-{\rm flow}_{L_{y}}(C_{1}\cup C_{2},M_{2})=2$, a contradiction.

Now suppose that the condition (ii) holds. By  Lemma \ref{one cycle}(ii) with its  proof, for $C=C_1$ or $C_2$  we know that ${\rm flow}_{L_{x}}(C,M_{1})-
{\rm flow}_{L_{x}}(C,M_{2})=\pm n$, which is determined by the last an edge $e_{j}$ of $L_{x}^{*}\cap E(I_{j})$ ($1\leq j\leq n$). Let $E_{1}=\{e_{1},e_{2},\ldots,e_{n}\}$.
Without loss of generality,
suppose that the first $n_{1}$
 edges of $E_{1}$ have the white end-vertices  on the left side of $L_{x}$ and the next  $n-n_{1}$  edges of $E_{1}$ have the white end-vertices
on the right side. Then  $\{e_1,\ldots,e_{n_1}\}\subseteq M_{1}$,
and  $\{e_{n_1+1},\ldots,e_{n}\}\subseteq M_{2}$.
 We can see that
${\rm flow}_{L_{x}}(C,M_{1})-
{\rm flow}_{L_{x}}(C,M_{2})=n_{1}-(-(n-n_{1}))=n$.
Hence ${\rm flow}_{L_{x}}(C_{1}\cup C_{2},M_{1})-
{\rm flow}_{L_{x}}(C_{1}\cup C_{2},M_{2})=2n$,
a contradiction.
\end{proof}

\noindent{\bf Proof of Theorem \ref{main result}.}
{\it Necessity:} Assume that $M_{1}$ and $M_{2}$ lie in the same connected component of the resonance graph $R_6(H)$. By Lemma \ref{necessity},
${\rm flow}_{L_{j}}(H, M_{1})$=${\rm flow}_{L_{j}}(H, M_{2})$ $(j=x,y)$;  by Lemma \ref{same ladder}, $M_{1}$ and $M_{2}$ have precisely the same ladders.

{\it Sufficiency:}  Assume that ${\rm flow}_{L_{j}}(H, M_{1})$=${\rm flow}_{L_{j}}(H, M_{2})$ $(j=x,y)$ and $M_{1}$ and $M_{2}$ have precisely the same ladders.
Let $\mathcal{C}=M_{1}\triangle M_{2}=\cup_{i=1}^k C_{i}$, $k\geq0$, where the $C_{i}$'s are  disjoint $(M_1,M_2)$-alternating cycles.
We will show that $R_{6}(H)$ has a path between $M_{1}$ and $M_{2}$ by induction on the number $k$.
If $k=0$, it is trivial.
Next, let $k\geq 1$.
If $\mathcal{C}$ contains a contractible cycle $C$, then $C$ bounds a hexagonal system. By using the induction hypothesis and the same argument as in the proof of Theorem \ref{N-flow} we can show  the result.
Therefore, suppose that $\mathcal C$ has no contractible cycles. Since ${\rm flow}_{L_{j}}(H, M_{1})={\rm flow}_{L_{j}}(H, M_{2})$ $(j=x,y)$, ${\rm flow}_{L_{j}}(\mathcal{C}, M_{1})
={\rm flow}_{L_{j}}(\mathcal{C},M_{2})$, which implies

\begin{equation}\label{C_{i}}
    \sum\limits_{i=1}^{k}({\rm flow}_{L_{j}}(C_i,M_{1})-
    {\rm flow}_{L_{j}}(C_i,M_{2}))=0.
\end{equation}

By Theorem \ref{two types}, all cycles of $\mathcal{C}$ can have the same winding vector.

{\bf{ Case 1.}}  All cycles of $\mathcal C$  have a same winding vector  $(1,0)$.

By Lemma \ref{one cycle}(i), $|{\rm flow}_{L_{y}}(C_i,M_{1})-{\rm flow}_{L_{y}}(C_i,M_{2})|=1$ for $1\leq i\leq k$.
Therefore, Eq. (\ref{C_{i}}) implies that $k$ is even and a half of the cycles of $C_{1},\ldots,C_{k}$ satisfy
${\rm flow}_{L_{y}}(C_{i},M_{1})-{\rm flow}_{L_{y}}(C_{i},M_{2})=1$,
the other half of cycles satisfy
${\rm flow}_{L_{y}}(C_{i},M_{1})-{\rm flow}_{L_{y}}(C_{i},M_{2})=-1$.
By Lemma \ref{two non-contractible cycle}, a half of the cycles of $C_{1},\ldots,C_{k}$ are proper $M_{1}$-alternating cycles and the other half of cycles are improper $M_{1}$-alternating cycles.  Take a proper and an improper $M_1$-alternating cycles $C_1$ and  $C_2$ in $\mathcal C$.

{\bf{ Case 2.}} All cycles of $\mathcal C$ have a  same winding vector  $(m,n)$, $n\geq 1$.

By Lemma \ref{one cycle}(ii), $|{\rm flow}_{L_{x}}(C_i,M_{1})-{\rm flow}_{L_{x}}(C_i,M_{2})|= n$ for $1\leq i\leq k$. So Eq. (\ref{C_{i}}) implies that $k$ is even
and a half of the cycles of $C_{1},\ldots,C_{k}$ satisfy
${\rm flow}_{L_{x}}(C_{i},M_{1})-{\rm flow}_{L_{x}}(C_{i},M_{2})=n$,
the other half of the cycles satisfy
${\rm flow}_{L_{x}}(C_{i},M_{1})-{\rm flow}_{L_{x}}(C_{i},M_{2})=-n$.
Also take a proper and an improper $M_1$-alternating cycles $C_1$ and  $C_2$ in $\mathcal C$.

In the above two cases, $C_{1}$ and $C_{2}$ have the same winding vector. Hence  they are freely homotopic non-contractible cycles. By Lemma \ref{cylinder},  $C_{1}$ and $C_{2}$ bound a cylinder in $H$. The subgraph consisting of the hexagons contained in this cylinder region forms a nanotube $N$ whose  boundary cycles are $C_1$ and $C_2$.

Let $M_{i}^{'}=M_{i}|_{N}$ for $i=1,2$.  Since $M_{1}$ and $M_{2}$ have precisely the same ladders, $M_{1}'$ and $M_{2}'$ have precisely the same ladders in $N$. By Lemma \ref{elementary nanotube}, $N$ is elementary. Let $M_3=M_1\triangle E(C_1)\triangle E(C_2)$ and $M_{3}^{'}=M_{3}|_{N}$.
Then $M_{1}^{'}\triangle M_{3}^{'}=E(C_1)\cup (C_2)$. We can show that $R_6(H)$ has a path from $M_1$ to $M_3$ and a path from $M_3$ to $M_2$ to complete the proof by an analogous to the final paragraph in the proof of Theorem \ref{N-flow}.
\hfill $\square$

\section{Some consequences}

The proof  of Theorem \ref{main result} implies  immediately the following consequence.
\begin{cor}\label{even}
Let $H$ be a toroidal polyhex and $M$, $M'$  two perfect matchings of $H$.  If $M$ and $M'$
lie in the same connected component of  $R_6(H)$,
then $H[M\triangle M']$ contains an even number of non-contractible cycles, half of which are proper $M$-alternating and the other half are improper $M$-alternating.
\end{cor}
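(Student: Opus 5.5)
By the necessity part of Theorem \ref{main result}, if $M$ and $M'$ lie in the same component of $R_6(H)$, then ${\rm flow}_{L_j}(H,M)={\rm flow}_{L_j}(H,M')$ for $j=x,y$ (Lemma \ref{necessity}). Write $M\triangle M'=\bigcup_{i=1}^k C_i$ as a disjoint union of $(M,M')$-alternating cycles. By Lemma \ref{one contractible cycle}, every contractible $C_i$ contributes equally to both flows. Let $C_1,\ldots,C_r$ be the non-contractible cycles; then
\[
\sum_{i=1}^{r}\bigl({\rm flow}_{L_j}(C_i,M)-{\rm flow}_{L_j}(C_i,M')\bigr)=0,\qquad j=x,y.
\]
If $r=0$ there is nothing to prove. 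If $r=1$, we orient the single cycle to have winding vector $(1,0)$ or $(m,n)$ with $n\ge1$. Lemma \ref{one cycle} then gives a nonzero difference ($1$ or $n$) for $L_y$ or $L_x$ respectively, a contradiction. So assume $r\ge 2$.

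By Theorem \ref{two types}, the cycles $C_1,\ldots,C_r$ can be oriented to share a common winding vector, which is $(1,0)$ or $(m,n)$ with $n\geq1$ (after replacing it by its negative if needed; note $(\pm1,0)$ is the only possibility when the second coordinate is $0$). In the first case we use $L_y$ and Lemma \ref{one cycle}(i); in the second we use $L_x$ and Lemma \ref{one cycle}(ii). Each non-contractible cycle contributes $\pm1$ (respectively $\pm n$), so the vanishing sum forces $r$ to be even, with exactly half of the contributions positive and half negative.

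The remaining step is to convert the sign of each contribution into the proper/improper status of the cycle. This is the step I expect to need the most care. The proof of Lemma \ref{two non-contractible cycle} in fact shows something sharper than its statement: two proper $M$-alternating cycles with the same winding vector always have contributions of the same sign, and likewise for two improper ones. I would make this explicit: the sign of a cycle's contribution is $+$ if and only if the cycle is proper, up to a global convention fixed by the common orientation. It follows that the cycles with positive contribution all have one type and those with negative contribution all have the other. Hence exactly half of the $r$ cycles are proper $M$-alternating and half are improper, which proves the corollary.
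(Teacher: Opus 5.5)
Your proposal is correct and follows the paper's argument essentially step for step: flow equality, cancellation of contractible cycles via Lemma \ref{one contractible cycle}, a common winding vector from Theorem \ref{two types}, contributions $\pm 1$ or $\pm n$ from Lemma \ref{one cycle}, and the proper/improper split as in Cases 1 and 2 of the proof of Theorem \ref{main result}; your separate handling of $r=1$ fills in a detail the paper leaves implicit. For the step you flagged, you do not need to sharpen the proof of Lemma \ref{two non-contractible cycle}: its literal statement, applied to each pair of one positive and one negative cycle, already forces all positive cycles to share one type and all negative cycles to share the other.
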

\begin{proof} 
By Theorem \ref{main result}, ${\rm flow}_{L_{j}}(H, M)={\rm flow}_{L_{j}}(H, M')$ $(j=x,y)$ and  $M_{1}$ and $M_{2}$ have precisely the same ladders. Further, we have ${\rm flow}_{L_{j}}(\mathcal{C}, M)={\rm flow}_{L_{j}}(\mathcal{C},M')$  as $M\triangle M'=\mathcal{C}$. By Lemma \ref{one contractible cycle}, for any contractible cycle $C$ of $\mathcal{C}$, $
{\rm flow}_{L_{j}}(C,M)-{\rm flow}_{L_{j}}(C,M')=0.$
For  the non-contractible cycles $C_{1},\ldots,C_{t}$  of $\mathcal{C}$,
\begin{equation}\label{4.6}
    \sum\limits_{i=1}^{t}({\rm flow}_{L_{j}}(C_i,M)-
    {\rm flow}_{L_{j}}(C_i,M'))=0,
\end{equation}
which implies that $t$ is even, and half of the cycles $C_i$ are proper $M$-alternating and the other half are improper $M$-alternating by analogous arguments to Cases 1 and  2 in the proof of Theorem \ref{main result}. \end{proof}

A perfect matching $M$ of a toroidal polyhex (or nanotube) $H$ is called {\it trivial}  if $H$ has  no $M$-alternating hexagons.

\begin{lem}\label{trivial}
Let $M$ be a perfect matching of a toroidal polyhex  $H$. Then the following statements are equivalent.
(i) $M$ contains a ladder; (ii) $M$ consists of ladders; (iii) $M$ is trivial.
\end{lem}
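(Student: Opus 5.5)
I would prove the cycle of implications (i)$\Rightarrow$(ii)$\Rightarrow$(iii)$\Rightarrow$(i).

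\emph{(i)$\Rightarrow$(ii).} Let $Z$ be a ladder of $M$, say a I-direction cycle using $a,c$ edge-directions. Since $\partial(V(Z))$ contains no edge of $M$, every edge of direction $b$ incident with $Z$ is not in $M$. Each such $b$-edge $uv$, with $u\in V(Z)$, has its other end $v$ on the adjacent parallel zigzag cycle $Z'$ of the same type. The I-direction zigzag cycles partition $V(H)$, and consecutive ones are joined exactly by the $b$-edges. Each vertex of $Z'$ is incident with exactly one $b$-edge, and that edge goes to $Z$ or to the zigzag cycle on the other side of $Z'$. By the hexagonal structure, half of the vertices of $Z'$ (one colour class) are joined to $Z$ by $b$-edges, and these edges are not in $M$. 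So those vertices must be matched along $Z'$.
\begin{itemize}
\item[] This forces $Z'$ to be $M$-alternating. Concretely, the vertices of one colour on $Z'$ are matched by $Z'$-edges, so $|M\cap E(Z')|=|V(Z')|/2$ and the remaining vertices are matched inside $Z'$ as well.
\item[] Consequently the $b$-edges on the far side of $Z'$ are also not in $M$, and $Z'$ is again a ladder.
\end{itemize}
Iterating around the torus, which is possible because the zigzag cycles of one type are cyclically ordered, every zigzag cycle of this type is a ladder. Since these cycles partition $V(H)$, $M$ is the union of the $M$-edges of these ladders, i.e.\ $M$ consists of ladders. The main care needed is checking the claim that one colour class of $Z'$ is attached to $Z$. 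I would verify it from the local picture of the hexagonal lattice, in the spirit of Fig.~\ref{Figure2-4}.

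\emph{(ii)$\Rightarrow$(iii).} Suppose $M$ consists of ladders all of one type, say containing no $b$-edges. Every hexagon of $H$ contains exactly two $b$-edges, which are opposite. An $M$-alternating hexagon must contain three $M$-edges, one of each direction, since alternate edges of a hexagon have pairwise distinct directions. This is impossible because no $b$-edge lies in $M$. Hence $M$ is trivial. If (ii) is read as allowing ladders of different types, note that two zigzag cycles of different types intersect, since their winding vectors differ (Theorem~\ref{two types}). Disjoint ladders covering $V(H)$ must therefore all be of one type, and the argument above applies.

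\emph{(iii)$\Rightarrow$(i).} This is the main obstacle. Assume $M$ has no alternating hexagon; I want to show that some edge-direction is missing from $M$. Then the two remaining directions form a 2-regular spanning subgraph, namely a disjoint union of zigzag cycles, on which $M$ is alternating, and so each of these cycles is a ladder.

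For each hexagon $s$, the alternate triples of its edges are $\{a,b,c\}$-triples. Let $m(s)=|M\cap E(s)|$. Consider the double count
\[
\sum_s m(s)=2|M|=|V(H)|=\#\text{hexagons}\cdot 3,
\]
so the average of $m(s)$ is exactly $3$. I would first try to show that a hexagon with $m(s)=3$ whose three $M$-edges are not alternate can be found, and that any such configuration propagates a forced structure. Such an $s$ contains an $M$-edge with both neighbours on $s$ not in $M$, and the neighbouring hexagons are then constrained.

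If that becomes messy, a cleaner alternative is to count by direction. Let $n_a,n_b,n_c$ be the numbers of $M$-edges of each direction. If all three are positive, I would locate two $M$-edges of different directions sharing a hexagon with a third suitable edge. I would then use a local exchange argument to show that either some hexagon is $M$-alternating or the pattern extends along a zigzag line and closes into a ladder. Since the ladder closes up on the torus, this yields (i).
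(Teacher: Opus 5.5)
\textbf{Verdict: there is a genuine gap, in (iii)$\Rightarrow$(i).} Your (i)$\Rightarrow$(ii) and (ii)$\Rightarrow$(iii) are correct and follow the paper's route. The paper propagates from a III-direction ladder and handles the other types by symmetry. Your colour-class argument for why the neighbouring zigzag cycle is forced to be $M$-alternating is more explicit than the paper's.

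The hard direction, (iii)$\Rightarrow$(i), is not established.
\begin{itemize}
\item[] \emph{The double count is wrong.} Each edge lies in exactly two hexagons and $H$ has $|V(H)|/2$ hexagons. So $\sum_s m(s)=2|M|=|V(H)|$ gives an average of $2$, not $3$. Had the average been $3$, every hexagon would have $m(s)=3$ and would be $M$-alternating, so no trivial perfect matching could exist, contradicting Fig.~\ref{Figure2-1}.
\item[] \emph{The configuration you look for cannot occur.} Three pairwise disjoint edges of a hexagon always form one of its two alternate triples, so $m(s)=3$ already means $s$ is $M$-alternating.
\item[] \emph{The fallback is only a restatement of the goal.} Saying you would ``locate two $M$-edges $\dots$ local exchange argument $\dots$ closes into a ladder'' gives neither a way to find a starting configuration nor the mechanism that forces it to propagate.
\end{itemize}

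\textbf{The missing idea is a forcing argument along a straight chain of hexagons.}
\begin{itemize}
\item[] \emph{Starting configuration.} Suppose $M$ is trivial and uses all three directions. Take an $a$-edge of $M$ and the $(a,b)$-zigzag cycle $C$ through it. If every $a$-edge of $C$ lies in $M$, then $C$ is a ladder. Otherwise, walking along $C$ you find an $a$-edge $e_2=u'u\in M\cap E(C)$ followed by a $b$-edge $uv$ of $C$ such that the $a$-edge at $v$ is not in $M$. Since $u$ is already matched, $v$ must be matched by its $c$-edge $e_1$.
\item[] \emph{Forcing step.} In the hexagon $s_1$ containing the consecutive edges $e_1,uv,e_2$, the edges $e_1$ and $e_2$ are alternate. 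Triviality therefore forces the third alternate edge $e_3$ (the $b$-edge of $s_1$ opposite $uv$) out of $M$. Each end of $e_3$ has its other $s_1$-edge blocked by $e_1$ or $e_2$. So both ends of $e_3$ are matched by their edges leaving $s_1$ (one $c$-edge, one $a$-edge).
\item[] \emph{Propagation.} With $e_3$ between them, these two edges reproduce exactly the same configuration in the hexagon $s_2$ across $e_3$.
\item[] \emph{Closing up.} Iterating, the chain $s_1,s_2,\dots$ is a translation orbit, so it closes up on the finite torus. The two zigzag cycles running along the sides of the chain then have every other edge in $M$, i.e.\ they are ladders.
\end{itemize}
This is the argument the paper gives. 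Without it or an equivalent, the only nontrivial implication of the lemma is unproved.
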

\begin{proof}
(i) $\Rightarrow$ (ii) Suppose that $M$ contains a ladder $Z$. Then $Z$ is an $M$-alternating zigzag cycle. If $Z$ is a III-direction ($a,b$-edge directions) cycle (see Fig. \ref{Figure2-4}), then the edges of $\partial(V(Z))$ are $c$-direction edges and does not belong to $M$.
By Remark \ref{zigzag}, $H$ has $q$ III-direction cycles, say  $C_{1},\ldots,C_{q}$ in turn. 
Without loss of generality, let $Z=C_{1}$.
Then the end-vertices of the edges of $\partial(V(C_{1}))$ not in $V(C_1)$  are covered by $M$, so the two III-direction cycles $C_{2}$ and $C_{q}$ are $M$-alternating cycles, which are  ladders of $M$.  Repeating the above process for $C_{i}$ ($i=2,\ldots,q-1$) in turn, we can obtain that all $C_i$ are ladders of $M$ and none of  $c$-direction edges of $H$  belong to $M$ (For an example, see Fig. \ref{Figure2-4}).
Thus, $M$ consists of ladders. If $Z$ is an I or II-direction  cycle (see Fig. \ref{Figure2-4}), similarly we can obtain the result.

(ii) $\Rightarrow$ (iii) Suppose that $M$ consists of ladders. Then $M$ has only two edge-direction edges.
Thus, $H$ has no $M$-alternating hexagons and $M$ is trivial.

(iii) $\Rightarrow$ (i) Suppose that $M$ is trivial.  That is, $H$ has no $M$-alternating hexagons. If $M$ contains edges of at most two edge-directions, then $M$ consists of ladders and we are done. Otherwise, take
an $a$-direction edge in $M$ and  a  III-direction cycle $C$ containing it.
If all vertices of $C$ are covered by the $a$-direction edges in $M$, then $C$ is a ladder of $M$ and we are done. Otherwise,  there exist a vertex $v$ of $C$ incident with a $c$-direction edge $e_{1}$ of $M$ and an $a$-direction edge $e_2$ in both $M$ and $C$ that is incident to a neighbor of $v$. Let $s_{1}$ be the hexagon of $H$ containing $e_{1}$ and  $e_{2}$. Then the $b$-direction edge $e_{3}$ of $s_{1}$ not adjacent to both $e_{1}$ and $e_{2}$  do not belong to $M$. Otherwise, $s_{1}$ is an $M$-alternating hexagon, a contradiction. Hence $M$ contains an $a$-direction edge $e_{5}$ and a $c$-direction edge $e_{4}$ of $M$ adjacent to $e_{3}$.   Let $s_{2}$ be a hexagon of $H$ containing $e_{4}$ and $e_{5}$.  Repeating the procedure, we eventually obtain an $M$-alternating II-direction cycle. Thus, $M$ contains a ladder.
\end{proof}

Each trivial perfect matching of a toroidal polyhex $H$ is a singleton of its resonance graph. Therefore, if two distinct perfect matchings of a toroidal polyhex $H$ lie in the same connected component of its resonance graph, then both must be nontrivial. Hence by adding such a natural condition we obtain a stronger result than Theorem \ref{main result} from Lemma \ref{trivial}.
\begin{thm}\label{stronger}
Let $H$ be a toroidal polyhex with two cut cycles $L_{x}$ and $L_{y}$. Two nontrivial perfect matchings $M_1$ and $M_2$ of $H$ lie in the same connected component of the resonance graph of  $H$ if and only if their flows across the cut cycles $L_{j}$ ($j=x,y$) are equal.
\end{thm}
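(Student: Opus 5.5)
The plan is to reduce Theorem \ref{stronger} to Theorem \ref{main result} by way of Lemma \ref{trivial}. The ladder condition will turn out to be vacuous for nontrivial perfect matchings.

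For necessity, suppose $M_1$ and $M_2$ lie in the same connected component of $R_6(H)$. Then Lemma \ref{necessity}, applied to each of $L_x$ and $L_y$, gives ${\rm flow}_{L_j}(H,M_1)={\rm flow}_{L_j}(H,M_2)$ for $j=x,y$. The nontriviality hypothesis is not needed in this direction.

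For sufficiency, assume $M_1$ and $M_2$ are nontrivial and have equal flows across $L_x$ and $L_y$.
\begin{enumerate}
\item By Lemma \ref{trivial}, the implication (i) $\Rightarrow$ (iii) in contrapositive form shows that a nontrivial perfect matching contains no ladder.
\item Hence $M_1$ and $M_2$ both have empty sets of ladders, so they have ``precisely the same ladders''.
\item The hypotheses of Theorem \ref{main result} are then satisfied, and it yields a path in $R_6(H)$ between $M_1$ and $M_2$.
\end{enumerate}

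The only point needing care is that Lemma \ref{trivial} supplies exactly the direction used: a ladder forces triviality. This is (i) $\Rightarrow$ (ii) $\Rightarrow$ (iii), which the paper has already established. No geometric work is required, because the real difficulty, namely building the nanotube between a proper and an improper non-contractible cycle and applying Lemma \ref{elementary nanotube}, is already contained in Theorem \ref{main result}. So I expect no genuine obstacle beyond checking that the ladder hypothesis of Theorem \ref{main result} is read as equality of ladder sets, so that two empty sets qualify.
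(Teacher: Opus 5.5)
Your proposal is correct and is essentially the paper's proof. For sufficiency, the contrapositive of (i)$\Rightarrow$(iii) in Lemma \ref{trivial} shows that both matchings have empty ladder sets, and then the sufficiency of Theorem \ref{main result} applies. For necessity, you cite Lemma \ref{necessity} directly, whereas the paper cites the necessity part of Theorem \ref{main result}; that part itself rests on Lemma \ref{necessity}, so the difference is only cosmetic.
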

\begin{proof}
The necessity follows by  Theorem \ref{main result}. For the sufficiency,
since $M_{1}$ and $M_{2}$ are nontrivial perfect matching of a toroidal polyhex $H$, by Lemma \ref{trivial},  $M_{1}$ and $M_{2}$ contain no ladders, which imply that $M_{1}$ and $M_{2}$ have precisely the same ladders. By the sufficiency of Theorem \ref{main result}, $R(H)$ has a path between $M_1$ and $M_2$.
\end{proof}

\begin{rem}\label{PMnumber}{\rm By Lemma \ref{trivial}  and Remark \ref{zigzag} we have that a toroidal polyhex $H(p,q,t)$ has exactly $2^{q}+2^{\frac{p}{g}}+2^{\frac{p}{g'}}-3$ trivial perfect matchings, where $g$ (resp. $g'$) is the smallest positive integer satisfying $gt\equiv0 (\text{mod } p)$ $($resp. $g'(q + t)\equiv0 (\text{mod } p))$.}\end{rem}
\begin{figure}
\centering
\includegraphics[scale=0.7]{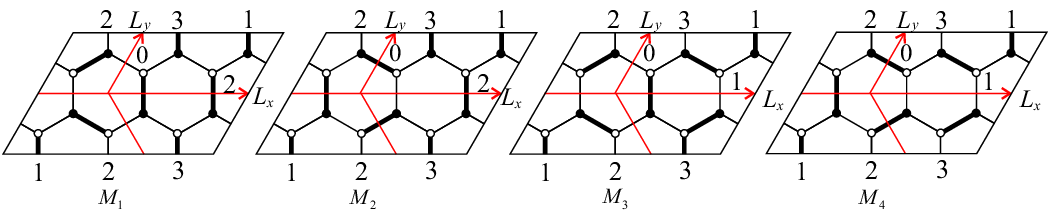}
\caption{\label{Figure2-11} Toroidal polyhex $H(3,2,2)$ and its four perfect matchings. }
\end{figure}
\begin{ex}
{\rm Let $H=H(3,2,2)$ be a toroidal polyhex and $M_{1}$, $M_{2}, M_{3}, M_{4}$ be four nontrivial perfect matchings of $H$ as shown in Fig. \ref{Figure2-11}.
Since
${\rm flow}_{L_{x}}(H, M_{1})={\rm flow}_{L_{x}}(H, M_{2})=2$ and ${\rm flow}_{L_{y}}(H, M_{1})={\rm flow}_{L_{y}}(H, M_{2})=0$, by  Theorem \ref{stronger}
we have that  $M_{1}$ and $M_{2}$  lie in the same connected component of  $R_6(H)$; Since ${\rm flow}_{L_{x}}(H, M_{3})={\rm flow}_{L_{x}}(H, M_{4})=1$ and ${\rm flow}_{L_{y}}(H, M_{3})={\rm flow}_{L_{y}}(H, M_{4})=0$,  $M_{3}$ and $M_{4}$ lie in the same connected component of  $R_6(H)$. Since ${\rm flow}_{L_{x}}(H, M_{1})\neq{\rm flow}_{L_{x}}(H, M_{3})$, $M_{1}$ and  $M_{3}$  lie in different connected components of  $R_6(H)$.} \end{ex}

\begin{rem}
{\rm The sufficiency of Theorem \ref{stronger} no longer holds for nanotubes.  The reason is that  Lemma \ref{trivial} does not hold for nanotubes. For example, given a (3, 0)-nanotube $N$ with  two perfect matchings $M_{1}$ and $M_{2}$ as shown in Fig. \ref{Figure2-12}. Then $M_{1}$ and $M_{2}$ are nontrivial perfect matchings since both $M_{1}$ and $M_{2}$-alternating hexagons exist, and  ${\rm flow}_{L}(N, M_{1})={\rm flow}_{L}(N, M_{2})=0$. However,  $M_{1}$ and $M_{2}$ contain different ladders, so  $M_{1}$ and  $M_{2}$ do not lie in the same connected component of  $R_6(N)$ by Theorem \ref{N-flow}.}
\end{rem}

\begin{figure}[ht]
\centering
\includegraphics[scale=0.7]{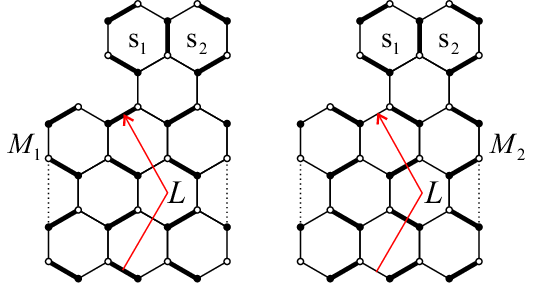}
\caption{\label{Figure2-12} A (3, 0)-nanotube and its two nontrivial perfect matchings.}
\end{figure}
\vspace{5pt}
\noindent{\textbf{Declarations}}\\
\textbf{Conflict of interest} The authors declare no conflict of interest.
%


\begin{thebibliography}{99}
\small \setlength{\itemsep}{-.4mm}


\bibitem{BCTZ25}
S. Brezovnik, Z. Che, N. Tratnik,  P. \v{Z}igert Pleter\v{s}ek, Resonance graphs of plane bipartite graphs as daisy cubes, Discrete Appl. Math. 366 (2025) 75-85.

\bibitem{Che18}
 Z. Che, Structural properties of resonance graphs of plane elementary bipartite graphs,
Discrete Appl. Math. 247 (2018) 102-110.


\bibitem{DP26} T. Do\v{s}li\'{c}, L. Podrug, Metallic cubes are the resonance graphs of phenylenes, Discrete Appl. Math. 378 (2026) 358-365.


\bibitem{EL21} J. Erickson, P. Lin, A toroidal Maxwell–Cremona–Delaunay correspondence, J. Comput. Geometry 12(2) (2021) 55-85.


\bibitem{Four03} J.C. Fournier, Combinatorics of perfect matchings in plane bipartite graphs and application to tilings, Theoret. Comput. Sci. 303 (2003) 333-351.

\bibitem{Gree09} S. Greenberg,  D. Randall, Convergence rates of Markov chains for some self-assembly and non-saturated Ising models, Theoret. Comput. Sci. 410 (2009) 1417-1427.

\bibitem{WGrun82} W. Gr\"{u}ndler, Signifilkante Elektronenstrukturen fur benzenoide Kohlenwasserstoffe. Wiss. Z. Univ. Halle. 31 (1982) 97-116.

\bibitem{Hatc01} A. Hatcher, Algebraic Topology, Amer. Math. Soc. Providence, RI, 2022.

\bibitem{LZhang03} P.C.B. Lam, H. Zhang, A distributive lattice on the set of perfect matchings of a plane bipartite graph, Order 20 (2003) 13-29.

\bibitem{Levi63} H.I. Levine, Homotopic curves on surfaces, Proc. Am. Math. Soc. 14(6) (1963) 986-990.


\bibitem{LZhan26} L. Liang, H, Zhang, Resonance graphs of coronoid systems and nanotubes, Discrete Appl. Math. 395 (2026) 443-455.

\bibitem{LWLZ26} Q. Liu, J. Wang, C. Li, H. Zhang, Components of flip graphs of domino tilings in quadriculated cylinder and torus, Appl. Math. Comput. 510 (2026) 129697.


\bibitem{LZZI25} Q. Liu, Y. Zhang, H. Zhang, Components of domino tilings under flips in quadriculated tori, Discrete Math. 348 (2025) 114396.

\bibitem{Lovp86} L. Lov\'asz, M. D. Plummer,  Matching Theory, North-Holland, Amsterdam, 1986.


\bibitem{STCR95} N.C. Saldanha, C. Tomei, M.A. Casarin, D. Romualdo, Spaces of domino tilings,
Discrete Comput. Geom. 14 (1995) 207-233.

\bibitem{Scha76}
J. A. Schafer, Representing homology classes on surfaces, Canad. Math. Bull. 19 (3) (1976) 373-374.

\bibitem{Schr93} A. Schrijver, Graphs on the torus and geometry of numbers, J. Combin. Theory Ser. B. 58(1) (1993) 147-158.


\bibitem{SLZh05} W.C. Shiu, P.C.B. Lam, H. Zhang, $k$-resonance in toroidal polyhexes, J. Math. Chem. 38(4) (2005) 451-466.


\bibitem{SHZ96}
H. Sachs, P. Hansen, M. Zheng, Kekul\'{e} count in tubular hydrocarbons, MATCH
Commun. Math. Comput. Chem. 33 (1996) 169-241.


\bibitem{T90} W.P. Thurston, Conway’s tiling groups, Amer. Math. Monthly 97(8) (1990) 757-773.


\bibitem{TYra23} N. Tratnik, D. Ye, Resonance graphs on perfect matchings of graphs on surfaces, Graphs Combin. 39(4) (2023) 15pp.

\bibitem{TZra15} N. Tratnik, P. \v{Z}igert Pleter\v{s}ek, Some properties of carbon nanotubes and their resonance graphs, MATCH Commun. Math. Comput. Chem.  74 (2015) 175-186.

\bibitem{TZra16} N. Tratnik, P. \v{Z}igert Pleter\v{s}ek, Resonance graphs of fullerenes, Ars Math. Contemp. 11 (2016) 425-435.

\bibitem{TZ16} N. Tratnik, P. $\rm{\check{Z}}$igert Pleter$\rm{\check{s}}$ek, Distributive lattice structure on the set of perfect matchings of carbon nanotubes, J. Math. Chem. 54(6) (2016) 1296-1305.


\bibitem{WYZ08} H. Wang, D. Ye, H. Zhang, The forcing number of toroidal polyhexes, J. Math. Chem. 43(2) (2008) 457-475.


\bibitem{ZGC88} F. Zhang, X. Guo, R. Chen, Z-transformation graphs of perfect matchings of hexagonal systems, Discrete Math. 72 (1988) 405-415.

\bibitem{ZLs08}
H. Zhang, P.C.B. Lam, W.C. Shiu, Resonance graphs and a binary coding for the 1-factors of benzenoid systems, SIAM J. Discrete Math. 22 (2008) 971-984.



\bibitem{Zhang06} H. Zhang, Z-transformation graphs of perfect matchings of plane bipartite graphs: a survey, MATCH Commun. Math. Comput. Chem. 56(3) (2006) 457-476.


\bibitem{ZY08} H. Zhang, D. Ye, $k$-resonant toroidal polyhexes, J. Math. Chem. 44(1) (2008) 270-285.

\bibitem{ZZ2000} H. Zhang, F. Zhang, Plane elementary bipartite graphs, Discrete Appl. Math. 105 (2000) 291-311.

\end{thebibliography}
\end{document}